\journal{J. COMPUT. APPL. MATH.}
\providecommand{\tabularnewline}{\\}
\newtheorem{thm}{Theorem}[section]
\newtheorem{rem}[thm]{Remark}
\newtheorem{prop}[thm]{Proposition}
\newtheorem{example}[thm]{Example}
\begin{document}
	
\begin{frontmatter}
	
\title{Decoupled, linear, unconditionally energy stable and charge-conservative finite element method for a inductionless magnetohydrodynamic phase-field model}

\author{Xiaorong Wang}

\address{LSEC, NCMIS, Academy of Mathematics and Systems Science, Chinese Academy of Sciences; School of Mathematical Science, University of Chinese Academy of Sciences, Beijing 100190, China.}

\author{Xiaodi Zhang\corref{correspondingauthor}}

\address{Henan Academy of Big Data, Zhengzhou University, Zhengzhou 450001, China.}

\cortext[correspondingauthor]{Corresponding author: zhangxiaodi@lsec.cc.ac.cn (X. Zhang)}

\ead{zhangxiaodi@lsec.cc.ac.cn (X. Zhang), wxr@lsec.cc.ac.cn (X. Wang)}

\begin{abstract}
In this paper, we consider the numerical approximation for a diffuse interface model of the two-phase incompressible inductionless magnetohydrodynamics problem. This model consists of Cahn-Hilliard equations, Navier-Stokes equations and Poisson equation. We propose a linear and decoupled finite element method to solve this highly nonlinear and multi-physics system. For the time variable,  the discretization is a combination of first order Euler semi-implicit scheme, several first order stabilization terms and implicit-explicit treatments for coupling terms. For the space variables, we adopt the finite element discretization, especially, we approximate the current density and electric potential by inf-sup stable face-volume mixed finite element pairs. With these techniques, the scheme only involves a sequence of decoupled linear equations to  solve at each time step. We show that the scheme is provably mass-conservative, charge-conservative and unconditionally energy stable. Numerical experiments are performed to illustrate the features, accuracy and efficiency of the proposed scheme.
\end{abstract}

\begin{keyword}
Inductionless MHD equations\sep Cahn-Hilliard equation\sep Mixed finite element method\sep Decouple scheme\sep Energy stable\sep Charge-conservative
\end{keyword}

\end{frontmatter}

\section{Introduction}
In recent years, the phase field method, also called the diffuse interface method, has been widely used to simulate the motion of multiphase incompressible immiscible fluids for both numerical and theoretical research \cite{anderson_diffuse-interface_1998,ding_diffuse_2007,liu_phase_2003}, such as in  fields like material sciences, fracture mechanics and  fluid mechanics. Unlike the traditional sharp interface model, the phase field method describes the interface by a balance of molecular forces in a very thin layer rather than a free curve that evolves over time. To indicate different phases, an auxiliary function $\phi$ called phase field or order parameter is introduced \cite{rayleigh_theory_1892,waals_thermodynamic_1893}. The phase function takes a distinct constant value in each bulk phase, and varies smoothly in the interfacial region. In the diffuse interface theory, the surface motion of the order parameter is driven by a gradient flow. There are two popular gradient flow-type governing equations: Allen-Cahn equations \cite{allen_1979} and Cahn-Hilliard equations \cite{cahn_free_1958}. Comparing with the classical sharp interface model, one principal advantage of phase field method is its ability to capture the interface implicitly and automatically, and allow for topological changes such as self-intersection, pinch-off and splitting. Thus, phase field method has become an attractive technique, for extensive study on the phase field approach, we refer to the recent reviews \cite{kim_phase-field_2012,shen_modeling_2011} and the references therein.

Magnetohydrodynamic (MHD) equations are used to describe the behavior of electrically conductive fluids under the influence of magnetic fields and electric currents, which couple the Navier-Stokes equations and Maxwell equations. The theoretical analysis and numerical simulation of incompressible MHD equations is an area of research currently undergoing intense study \cite{gerbeau_stabilized_2000,gerbeau_mathematical_2006,moreau_magnetohydrodynamics_1990}. However, in most industrial and laboratory cases, the magnetic Reynolds number is small, then the induced magnetic field can be neglected compared with the applied magnetic field. In these cases, Maxwell  equations are replaced by Poisson equation for the electric scalar potential, which yields the inductionless magnetohydrodynamic (IMHD) equations \cite{badia_block_2014,li_charge-conservative_2019,ni_current_2007}. It is well-known that numerical simulation
of multiphase flow is a challenging problem in computational
fluid dynamics. This problem becomes more difficult for MHD due to the interaction between multiple physical fields. The multiphase MHD flow often is involved in  a wide range of scientific and industrial problems, such as astrophysics, aluminum electrolysis, liquid metal magnetic pumps, MHD power generators, fusion reactor blankets 
\cite{abdou_2001,davidson_introduction_2001}. Thus developing fast and effective algorithms for incompressible multiphase MHD equations has great theoretical significance and applicable value.

In this paper, we focus on the two-phase incompressible IMHD problem in a general Lipschitz domain, which is frequently performed to simulate the movement in aluminum electrolysis cell  \cite{gerbeau_mathematical_2006,munger_level_2006} and explore the theoretical study and numerical experiments of liquid lithium-lead cladding \cite{abdou_us_2005}. Traditionally, most of the existing models for multiphase incompressible IMHD problem are devoted to sharp interface models, for instance, the front-tracking method \cite{samulyak_numerical_2007,zhang_direct_2018}, the volume-of-fluid method \cite{huang_3d_2002,pan_development_2012,takatani_mathematical_2007}, the level-set method \cite{ki_level_2010,xie_tracking_2007}. There are few works have focused on the two-phase incompressible IMHD equations by diffuse-interface method in the literature.
In 2014, Ding et al. \citep{Ding2014} presented a
two-phase incompressible inductionless magnetohydrodynamics model and simulated the deformation of melt interface in an aluminum electrolytic cell.
In 2020, Chen and his collaborators \citep{Chen2020} proposed a linear, decoupled, unconditionally energy stable and second order time-marching schemes to simulate the two-phase incompressible Cahn-Hilliard-IMHD (CHIMHD) conducting flow. Zhang \citep{Zhang2021} derived a diffuse interface model for IMHD fluids systematically and analyzed sharp interface limits for
different choice of the mobilities. Mao et al. \cite{mao_fully_2021} analyzed the fully discrete finite element approximation of a three-dimensional CHIMHD model and proved the well-posedness of weak solution to the phase field model by using the classical compactness method.

Studying efficient numerical methods for solving incompressible CHIMHD flows is the main focus of this article. The CHIMHD model is a complex system that involves three coupled physical processes: the phase field model (Cahn-Hilliard equation), fluid dynamics (Navier-Stokes equations), and electric field (Possion equation). Due to the highly nonlinear with multi-physics fields coupled in this system, it is relatively complicated and time-consuming to solve it by nonlinear and coupled scheme \citep{Chen2020,mao_fully_2021}. Our primary goal is to propose a decoupled and efficient fully discrete mixed finite element method to solve the system. The time discretization is an Euler semi-implicit scheme with some extra first order stabilization terms in Cahn-Hilliard equations and Poisson equation, and implicit-explicit treatments for coupling terms. The subtle time splitting technique allows us to fully decouple system into three processes at each time step. For spacial discretization, we adopt classical inf-sup stable finite element to discretize the phase field and 
chemical potential, standard velocity-pressure stable finite element to discretize the hydrodynamic unknowns, and stable face-volume finite element pairs to discretize the current density and electric potential. With these techniques, we obtain an efficient fully discrete scheme, which is decoupled, linear while still mass-conservative, charge-conservative and unconditionally energy stable. Numerical experiments are performed to demonstrate the features, accuracy and efficiency of the proposed scheme.

We emphasize that the proposed scheme is charge-conservative and unconditionally energy stable. The charge conservation law, namely, $\rm{div}\boldsymbol{J} = 0$ is a physical law in electromagnetics, which plays an important role in keeping the calculation accuracy for the simulation of MHD fluid.  It was believed that
only when the numerical schemes numerically maintains the physical conservation laws, such as momentum conservation  and charge conservation, can we get accurate results for MHD flows at high Hartmann number \cite{ni_consistent_2012,ni_current_2007}. Due to the rapid changes near the interface, the non-compliance of energy dissipation laws of the scheme may lead to spurious numerical solutions \citep{Hua2011,Johnston2004}. Thus, the design of unconditionally energy stable schemes are of great importance for solving phase field models. To the best of our knowledge, the scheme presented in this paper is the first decoupled, linear and unconditionally energy stable scheme for a phase field model of two-phase inductionless MHD flows.

The rest of this paper is organized as follows. In section 2, we present the CHIMHD model, derive a weak formulation and the dissipative energy law. In section 3, we propose a fully discrete charge-conservation finite element
method and prove its unconditional energy stability. In section 4, we present some numerical simulations to validate our schemes. Some concluding remarks are given in section 5.

\section{The diffuse interface model}\label{sec:model}
In this section, we first introduce the CHIMHD model, then present a weak formulation, and finally show the dissipative energy law in the PDE level.
\subsection{Model system}
We consider a phase-field model for mixture of two immiscible, incompressible and conducting fluids with the same density in a bounded domain $\Omega$ with Lipschitz-continuous boundary $\Sigma\coloneqq\partial\Omega$
in $\mathbb{R}^{{\rm d}},{\rm d}=2,3$. Following \citep{mao_fully_2021,Zhang2021}, the dynamic behavior of the
flow is governed by the coupling model of the Cahn-Hilliard equations and the IMHD equations. In this paper, we consider the incompressible CHIMHD system as follows:
\begin{align}
\rho\left(\boldsymbol{u}_{t}+\boldsymbol{u}\cdot\nabla\boldsymbol{u}\right)-2\nabla\cdot\left(\eta\left(\varphi\right)D\left(\boldsymbol{u}\right)\right)+\nabla p+\varphi\nabla\mu & =\boldsymbol{J}\times\boldsymbol{B}\text{ in }\Omega,\label{model:fluid}\\
\mathrm{div}\boldsymbol{u} & =0\text{ in }\Omega,\label{model:divu}\\
\sigma\left(\varphi\right)^{-1}\boldsymbol{J}+\nabla\phi-\boldsymbol{u}\times\boldsymbol{B} & =\boldsymbol{0}\text{ in }\Omega,\label{model:poisson}\\
\mathrm{div}\boldsymbol{J} & =0\text{ in }\Omega,\label{model:divJ}\\
\varphi_{t}+{\rm div}(\varphi\boldsymbol{u})-M\Delta\mu & =0\text{ in }\Omega,\label{model:ch}\\
-\gamma\varepsilon\Delta\varphi+\frac{\gamma}{\varepsilon}f(\varphi)-\mu & =0\text{ in }\Omega.\label{model:chc}
\end{align}\label{model:chimhd}
where $\rho$ is the density of fluids, $\boldsymbol{u},p$ denote
the velocity and pressure of fluids, $\boldsymbol{J},\phi$ represent the current density and electric scalar potential, $\varphi,\mu$ denote the phase function and chemical potential, and $\boldsymbol{B}$ is the applied magnetic field which is assumed to be given. $D(\mathbf{u})=\frac{1}{2}(\nabla\mathbf{u}+\nabla\mathbf{u}^{\mathrm{T}})$ is the symmetric gradient tensor. In the Cahn-Hilliard equations (\ref{model:ch})-(\ref{model:chc}), $f(\varphi)=F^{\prime}(\varphi)$, with $F(\varphi)$ is the double-well potential, $\gamma$
is the surface tension coefficient, and $\epsilon$ is the interface thickness, $M$ represents the diffusional mobility related to the relaxation time scale. In the IMHD equations (\ref{model:fluid})-(\ref{model:divJ}),
the viscosity of the fluids $\eta(\varphi),$ and the electric conductivity $\sigma(\varphi)$ are depending on the phase function $\varphi$. They are Lipschitz-continuous functions of $\varphi$ satisfying
\noindent
\[
\begin{array}{l}
0<\min\left\{ \eta_{1},\eta_{2}\right\} \leq\eta(\varphi)\leq\max\left\{ \eta_{1},\eta_{2}\right\} ,\\
0<\min\left\{ \sigma_{1},\sigma_{2}\right\} \leq\sigma(\varphi)\leq\max\left\{ \sigma_{1},\sigma_{2}\right\} ,
\end{array}
\]
where $\eta_{i}$ and $\sigma_{i}$ (i=1,2) are the viscosity and electric
conductivity of the fluid on each phase. The system of equations
are supplemented with the following initial values and boundary conditions
\[
\begin{array}{r}
\boldsymbol{u}(0)=\boldsymbol{u}_{0},\quad\varphi(0)=\varphi_{0}\quad\text{ in }\Omega,\\
\boldsymbol{u}=0,\quad\boldsymbol{J}\cdot\boldsymbol{n}=0,\quad\partial_{\boldsymbol{n}}\varphi=\partial_{\boldsymbol{n}}\mu=0\quad\text{ on }\Sigma,
\end{array}
\]
where $\boldsymbol{n}$ is the outer unit normal of $\Sigma$. In
this paper, we consider two-phase flows with matching density $\rho_{1}=\rho_{2}$. Without lose of generality, we set $\rho\equiv1$ in the above system
(\ref{model:chimhd}).

\begin{rem}
The first term on the right-hand side of (\ref{model:fluid}), $\boldsymbol{J}\times\boldsymbol{B}$
is the Lorentz force. The last term on the left-hand side of (\ref{model:fluid}) the continuum surface tension force in the potential form, which denotes the phase introduced force. This term appears differently in 
literature \cite{shen_decoupled_2015,shen_modeling_2011}: $\nabla\cdot(\nabla\phi\otimes\nabla\phi),$ $-\mu\nabla\phi$
or $\phi\nabla\mu.$ It can be shown that these three expressions are
equivalent by redefining the pressure $p$.
\end{rem}

\begin{rem}Though the model considered here is a two-phase model with matched density, one can still employ this model with Boussinesq approximation to model the effect of density difference by a gravitational force in case of small density ratio. The large density ratio case is reserved for future work.
\end{rem}

In this paper, we consider the truncated double well potential $F(\varphi)$,
\[
F(\varphi)=\frac{1}{4}\left\{ \begin{array}{ll}
4(\varphi+1)^{2} & \text{ if }\varphi<-1, \\
\left(\varphi^{2}-1\right)^{2} & \text{ if }-1\leq\varphi\leq 1, \\
4(\varphi-1)^{2} & \text{ if }\varphi>1.
\end{array}\right.
\]
The original definition of the potential is logarithmic \citep{cahn_free_1958}, which guarantees the phase field $\varphi$ stays within (-1,1). The Ginzburg-Landau potential $F_{\rm{GL}}(\varphi)=\frac{1}{4}(1-\varphi^2)^2$ is an popular alternative to approximate the one suggested by Cahn and Hilliard. Following \cite{shen_decoupled_2015,shen_numerical_2010}, we further restrict the growth of the potential to quadratic away from the range [-1,1]. It is proved by \cite{caffarelli_bound_1995} that the truncated $F(\varphi)$ also ensures the boundness of $\varphi$ in the Cahn-Hilliard equation. Therefore, it
is a common practice to consider the Cahn-Hilliard equation with the truncated double-well potential $F(\varphi)$ \cite{shen_phase-field_2010,shen_decoupled_2015}. It is clear that the second derivative of $F(\varphi)$ is continuous and bounded,
\[
L:=\max_{\varphi\in\mathbb{R}}\left|F^{\prime\prime}(\varphi)\right|=2.
\]
This property is of great help in handling the nonlinear double-well potential  by  using stabilization method. For the treatment of the general double-well potential, we give some comments in Remark \ref{rem:stabch}.

\subsection{Weak formulation and energy estimate}
Firstly, we introduce some useful notations and Sobolev spaces. Let $L^{2}(\Omega)$
be the space of square-integrable functions that equipped with the
inner product and norm: 
\[
(f,g):=\int_{\Omega}fg\,{\rm d}x,\quad\|f\|_{0}:=(f,f)^{1/2},\quad\forall f,g\in L^{2}(\Omega).
\]
Its subspace with mean zero over $\Omega$ is written as $L_{0}^{2}(\Omega)$. Let $H^{1}(\Omega),\boldsymbol{H}({\rm div},\Omega)$ be the subspaces of $L^{2}(\Omega)$ with square integrable gradients and square integrable divergences respectively. The equipped norm in $\boldsymbol{H}({\rm div},\Omega)$ is defined by $$\left\Vert \boldsymbol{J}\right\Vert _{{\rm div},\Omega}=\left(\|\boldsymbol{J}\|_{0,\Omega}^{2}+\|{\rm div}\boldsymbol{J}\|_{0,\Omega}^{2}\right)^{1/2}.$$  
Their subspaces with vanishing traces and vanishing normal traces on $\Sigma$ are denoted by $H_{0}^{1}(\Omega),\boldsymbol{H}_{0}(\mathrm{div},\Omega)$.
We refer to \cite{monk_finite_2003} for their definitions and inner products. 

For convenience, we introduce some notations for function spaces
\begin{align*}
\boldsymbol{V} & \coloneqq\boldsymbol{H}_{0}^{1}(\Omega),\quad X\coloneqq H^{1}(\Omega),\quad Q=L_{0}^{2}(\Omega),\\
\boldsymbol{D} & \coloneqq\boldsymbol{H}_{0}(\mathrm{div},\Omega),\quad S=L_{0}^{2}(\Omega).
\end{align*}

To derive the weak formulation of (\ref{model:chimhd}), we
define a trilinear form:
\[
\begin{array}{l}
O(\boldsymbol{w},\boldsymbol{u},\boldsymbol{v})=\frac{1}{2}\left(\boldsymbol{w}\nabla\boldsymbol{u},\boldsymbol{v}\right)-\frac{1}{2}\left(\boldsymbol{w}\nabla\boldsymbol{v},\boldsymbol{u}\right),\end{array}
\]
for any $\boldsymbol{u},\boldsymbol{v},\boldsymbol{w}\in\boldsymbol{V}$.

Armed with the above notation, a weak formulation of the system (\ref{model:chimhd})
amounts to find $(\boldsymbol{u},p,\boldsymbol{J},\phi,\varphi,\mu)\in$ $\boldsymbol{V}\times Q\times\boldsymbol{D}\times S\times X\times X$
such that
\begin{equation}
\begin{aligned}\left\langle \boldsymbol{u}_{t},\boldsymbol{v}\right\rangle +2(\eta(\varphi)D(\boldsymbol{u}),D(\boldsymbol{v}))+\mathcal{O}(\boldsymbol{u},\boldsymbol{u},\boldsymbol{v})-(p,{\rm div}\boldsymbol{v})+(\varphi\nabla\mu,\boldsymbol{v})-\left(\boldsymbol{J}\times\boldsymbol{B},\boldsymbol{v}\right) & =\boldsymbol{0},\\
({\rm div}\boldsymbol{u},q) & =0,\\
\left(\sigma(\varphi)^{-1}\boldsymbol{J},\boldsymbol{K}\right)-\left(\boldsymbol{u}\times\boldsymbol{B},\boldsymbol{K}\right)-(\phi,{\rm div}\boldsymbol{K}) & =\boldsymbol{0},\\
({\rm div}\boldsymbol{J},\chi) & =0,\\
\left\langle \varphi_{t},\psi\right\rangle -(\varphi\boldsymbol{u},\nabla\psi)+M(\nabla\mu,\nabla\psi) & =0,\\
\lambda\varepsilon(\nabla\varphi,\nabla\chi)+\frac{\lambda}{\varepsilon}(f(\varphi),\chi)-(\mu,\chi) & =0,
\end{aligned}
\label{eq:weak}
\end{equation}

\noindent for all $(\boldsymbol{v},q,\boldsymbol{K},\theta,\psi,\chi)\in\boldsymbol{V}\times Q\times\boldsymbol{D}\times S\times X\times X$.

Now, we are in a position to establish the energy estimate for the
CHIMHD system.
\begin{thm}
\label{thm:weakengy}Let $(\boldsymbol{u},p,\boldsymbol{J},\phi,\varphi,\mu)$
be the solution of (\ref{eq:weak}). The following energy dissipation law holds
\[
\mathrm{E}(t)+\int_{0}^{t}\mathrm{P}(s)\mathrm{d}s=\mathrm{E}(0)
\]
where 
\begin{align*}
\mathrm{E}(t) & :=\int_{\Omega}\left(\frac{1}{2}|\boldsymbol{u}|^{2}+\frac{\lambda\epsilon}{2}|\nabla\varphi|^{2}+\frac{\lambda}{\epsilon}F(\varphi)\right){\rm d}x,\\
\mathrm{P}(t) & :=M\|\nabla\mu\|_{0}^{2}+2\|\sqrt{\eta(\varphi)}D(\boldsymbol{u})\|_{0}^{2}+\left\Vert \sqrt{\sigma(\varphi)^{-1}}\boldsymbol{J}\right\Vert _{0}^{2}.
\end{align*}
\end{thm}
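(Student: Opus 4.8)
The plan is to establish the pointwise-in-time identity $\frac{d}{dt}\mathrm{E}(t)+\mathrm{P}(t)=0$ by inserting into the weak formulation (\ref{eq:weak}) the test functions that convert the time-derivative terms into $\frac{d}{dt}\mathrm{E}$, leave exactly the symmetric dissipation $\mathrm{P}$, and make each coupling term cancel against its counterpart; the asserted conservation law then follows at once by integrating in time from $0$ to $t$.

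I would organise the computation in three blocks. \emph{Hydrodynamic block:} take $\boldsymbol{v}=\boldsymbol{u}$ in the momentum equation and $q=p$ in the incompressibility constraint. The pressure term $(p,\mathrm{div}\boldsymbol{u})$ drops out, the inertial term $\mathcal{O}(\boldsymbol{u},\boldsymbol{u},\boldsymbol{u})$ vanishes by the skew-symmetry of $\mathcal{O}$ in its last two slots, $\langle\boldsymbol{u}_{t},\boldsymbol{u}\rangle=\frac12\frac{d}{dt}\|\boldsymbol{u}\|_{0}^{2}$, and what remains is $2\|\sqrt{\eta(\varphi)}D(\boldsymbol{u})\|_{0}^{2}$ together with the surface-tension coupling $(\varphi\nabla\mu,\boldsymbol{u})$ and the Lorentz coupling $-(\boldsymbol{J}\times\boldsymbol{B},\boldsymbol{u})$. \emph{Electromagnetic block:} take $\boldsymbol{K}=\boldsymbol{J}$ in Ohm's law and $\theta=\phi$ in the charge-conservation constraint; the term $(\phi,\mathrm{div}\boldsymbol{J})$ disappears and there remains $\|\sqrt{\sigma(\varphi)^{-1}}\boldsymbol{J}\|_{0}^{2}-(\boldsymbol{u}\times\boldsymbol{B},\boldsymbol{J})$. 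Since $(\boldsymbol{J}\times\boldsymbol{B})\cdot\boldsymbol{u}=-(\boldsymbol{u}\times\boldsymbol{B})\cdot\boldsymbol{J}$ pointwise, by cyclicity of the scalar triple product, adding the two blocks cancels both magnetic couplings identically. \emph{Phase-field block:} take $\psi=\mu$ in the Cahn-Hilliard equation and $\chi=\varphi_{t}$ in the chemical-potential equation; using $f=F'$ so that $(f(\varphi),\varphi_{t})=\frac{d}{dt}\int_{\Omega}F(\varphi)\,\mathrm{d}x$ and $\lambda\varepsilon(\nabla\varphi,\nabla\varphi_{t})=\frac{\lambda\varepsilon}{2}\frac{d}{dt}\|\nabla\varphi\|_{0}^{2}$, and then eliminating the common term $\langle\varphi_{t},\mu\rangle=(\mu,\varphi_{t})$ between the two equations, one gets $\frac{d}{dt}\!\left(\frac{\lambda\varepsilon}{2}\|\nabla\varphi\|_{0}^{2}+\frac{\lambda}{\varepsilon}\int_{\Omega}F(\varphi)\,\mathrm{d}x\right)+M\|\nabla\mu\|_{0}^{2}=(\varphi\boldsymbol{u},\nabla\mu)$. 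Finally, observing that $(\varphi\nabla\mu,\boldsymbol{u})=(\varphi\boldsymbol{u},\nabla\mu)$, I add this relation to the combined hydrodynamic and electromagnetic one; the last pair of coupling terms cancels and exactly $\frac{d}{dt}\mathrm{E}(t)+\mathrm{P}(t)=0$ remains.

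All of this is purely algebraic once the test functions are fixed, so the one point that genuinely needs care is regularity: using $\chi=\varphi_{t}$ presupposes $\varphi_{t}\in X=H^{1}(\Omega)$, and identifying the duality pairing $\langle\varphi_{t},\mu\rangle$ with the inner product $(\mu,\varphi_{t})$ presupposes $\varphi_{t}\in L^{2}(\Omega)$. The argument is therefore a formal a priori estimate, rigorous for solutions possessing this additional regularity; for a general weak solution one would run the same cancellations on a Galerkin approximation and pass to the limit using weak lower semicontinuity of the norms, which is the only nontrivial (though routine) step.
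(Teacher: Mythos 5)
Your proposal is correct and follows essentially the same route as the paper: both choose the test functions $(\boldsymbol{v},q,\boldsymbol{K},\theta,\psi,\chi)=(\boldsymbol{u},p,\boldsymbol{J},\phi,\mu,\varphi_{t})$ in (\ref{eq:weak}), cancel the Lorentz and surface-tension couplings pairwise, and integrate the resulting identity $\frac{d}{dt}\mathrm{E}+\mathrm{P}=0$ over $(0,t)$. Your added remark on the regularity needed to justify $\chi=\varphi_{t}$ is a sensible refinement the paper leaves implicit.
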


\begin{proof}
Taking $(\boldsymbol{v},q,\boldsymbol{K},\theta,\psi,\chi)=(\boldsymbol{u},p,\boldsymbol{J},\phi,\mu,\varphi_{t})$ in (\ref{eq:weak}), we have
\[
\begin{array}{l}
\left(\boldsymbol{u}_{t},\boldsymbol{u}\right)+2\|\sqrt{\eta(\varphi)}D(\boldsymbol{u})\|_{0}^{2}-(\boldsymbol{J}\times\boldsymbol{B},\boldsymbol{u})+(\varphi\nabla\mu,\boldsymbol{u})=0,\\
\left(\sigma(\varphi)^{-1}\boldsymbol{J},\boldsymbol{J}\right)-(\boldsymbol{u}\times\boldsymbol{B},\boldsymbol{J})=0,\\
\left(\varphi_{t},\mu\right)-(\varphi\boldsymbol{u},\nabla\mu)+M\|\nabla\mu\|_{0}^{2}=0,\\
\lambda\varepsilon(\nabla\varphi,\nabla\varphi_{t})+\frac{\lambda}{\varepsilon}(f(\varphi),\varphi_{t})-(\mu,\varphi_{t})=0.
\end{array}
\]
Combining the these equalities, we obtain
\[
\left(\boldsymbol{u}_{t},\boldsymbol{u}\right)+\lambda\varepsilon(\nabla\varphi,\nabla\varphi_{t})+\frac{\lambda}{\varepsilon}\left(f(\varphi),\varphi_{t}\right)+2\|\sqrt{\eta(\varphi)}D(\boldsymbol{u})\|_{0}^{2}+\left\Vert \sqrt{\sigma(\varphi)^{-1}}\boldsymbol{J}\right\Vert _{0}^{2}+M\|\nabla\mu\|_{0}^{2}=0.
\]
 Integrating both sides over $(0,t)$ yields the theorem.
\end{proof}

The energy law describes the evolution of the total energy caused
by energy conversion. Since the induced magnetic field can be neglected
and the electric field is considered to be quasi-static, the total
energy $\mathrm{E}$ only consists of the fluid kinetic energy $\frac{1}{2}\|u\|_{0}^{2}$ and the Cahn-Hilliard free energy. The dissipation of $\mathrm{E}$ stems from the friction losses $\|\sqrt{\eta(\varphi)}D(\boldsymbol{u})\|_{0}^{2}$,
the Ohmic losses $\left\Vert\sqrt{\sigma(\varphi)^{-1}}\boldsymbol{J}\right\Vert _{0}^{2}$ and the diffusion transport term $M\|\nabla\mu\|_{0}^{2}$.

\section{Decoupled energy stable finite element method}
In this section, we propose a decoupled, energy stable, mixed finite element scheme for continuous problem (\ref{eq:weak}).

Let $\mathcal{T}_{h}$ be a shape-regular simplex subdivision of $\Omega$. As usual, we introduce the local mesh size $h_{K}=\mathrm{diam}\left(K\right)$
and global mesh size $h:=\underset{K\in\mathcal{T}_{h}}{\max}h_{K}$. Here we choose conforming finite element space pairs $\left(\boldsymbol{V}_{h},Q_{h}\right)\subset\left(\boldsymbol{V},Q\right)$
to discretize velocity \textbf{$\boldsymbol{u}$} and pressure $p$,
$\left(\boldsymbol{D}_{h},S_{h}\right)\subset\left(\boldsymbol{D},S\right)$
to approximate current density $\boldsymbol{J}$ and electric potential
$\varphi$, and $\left(X_{h},X_{h}\right)$ to discretize the 
phase field function $\phi$ and chemical potential $\mu$. In addition, we assume these spaces satisfy the following inf-sup conditions.
\begin{prop}[inf-sup condition]
The finite element pairs $\left(\boldsymbol{V}_{h},Q_{h}\right)$,
$\left(\boldsymbol{D}_{h},S_{h}\right)$ and $\left(X_{h},X_{h}\right)$ satisfy the following uniform inf-sup conditions:
\begin{align}
\inf_{q_{h}\in Q_{h}}\sup_{\boldsymbol{v}_{h}\in\boldsymbol{V}_{h}}\frac{(q_{h},{\rm div}\boldsymbol{v}_{h})}{\left\Vert \nabla\boldsymbol{v}_{h}\right\Vert _{0}\left\Vert q_{h}\right\Vert _{0}} & \geq\beta_{s},\\
\inf_{\theta_{h}\in S_{h}}\sup_{\boldsymbol{K}_{h}\in\boldsymbol{D}_{h}}\frac{(\psi_{h},{\rm div}\boldsymbol{K}_{h})}{\left\Vert \boldsymbol{K}_{h}\right\Vert _{{\rm _{div}}}\left\Vert \theta_{h}\right\Vert _{0}} & \geq\beta_{m},\\
\inf_{\chi_{h}\in X_{h}}\sup_{\psi_{h}\in X_{h}}\frac{(\nabla\chi_{h},\nabla\psi_{h})}{\left\Vert \chi_{h}\right\Vert _{1}\left\Vert \psi_{h}\right\Vert _{1}} & \geq\beta_{c},
\end{align}
where $\beta_{s}$, $\beta_{m}$ and \textbf{$\beta_{s}$} only depend
on $\Omega$.
\end{prop}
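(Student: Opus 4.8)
The plan is to observe that these three bounds are hypotheses on the discrete spaces and to verify them for the standard element families one has in mind, recalling in each case the mechanism that produces an $h$-independent constant. For the Stokes pair $(\boldsymbol{V}_h,Q_h)$ I would take the Taylor--Hood element $(\mathbb{P}_k,\mathbb{P}_{k-1})$ with $k\ge 2$ (or the MINI element), so that the first inequality becomes the classical Babu\v{s}ka--Brezzi (LBB) condition. The argument is standard: one recalls that $\mathrm{div}\colon\boldsymbol{V}\to Q$ has a bounded right inverse with norm depending only on $\Omega$, constructs a Fortin operator $\Pi_h\colon\boldsymbol{V}\to\boldsymbol{V}_h$ with $(\mathrm{div}(\boldsymbol{v}-\Pi_h\boldsymbol{v}),q_h)=0$ for all $q_h\in Q_h$ and $\|\nabla\Pi_h\boldsymbol{v}\|_0\le C\|\nabla\boldsymbol{v}\|_0$, and composes the continuous inf-sup estimate with $\Pi_h$ to obtain $\beta_s>0$ uniformly in $h$; I would cite the standard references for this.

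For the face-volume pair $(\boldsymbol{D}_h,S_h)$ I would take $\boldsymbol{D}_h$ to be a Raviart--Thomas ($\mathrm{RT}_k$) or Brezzi--Douglas--Marini (BDM) space---whose degrees of freedom are attached to faces and to cell interiors, hence ``face-volume''---and $S_h$ the matching space of discontinuous piecewise polynomials intersected with $L^2_0(\Omega)$. The two key ingredients are the commuting-diagram property $\mathrm{div}\,\Pi_h^{\mathrm{div}}\boldsymbol{K}=P_h\,\mathrm{div}\boldsymbol{K}$, with $\Pi_h^{\mathrm{div}}$ the canonical $\boldsymbol{H}(\mathrm{div})$-interpolant and $P_h$ the $L^2$-projection onto $S_h$, and a bounded right inverse of $\mathrm{div}\colon\boldsymbol{H}_0(\mathrm{div},\Omega)\to L^2_0(\Omega)$. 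Given $\theta_h\in S_h$, pick $\boldsymbol{K}\in\boldsymbol{H}_0(\mathrm{div},\Omega)$ with $\mathrm{div}\boldsymbol{K}=\theta_h$ and $\|\boldsymbol{K}\|_1\le C\|\theta_h\|_0$; then $\boldsymbol{K}_h:=\Pi_h^{\mathrm{div}}\boldsymbol{K}\in\boldsymbol{D}_h$ obeys $\mathrm{div}\boldsymbol{K}_h=P_h\theta_h=\theta_h$ and $\|\boldsymbol{K}_h\|_{\mathrm{div},\Omega}\le C\|\boldsymbol{K}\|_1\le C\|\theta_h\|_0$, so that $(\theta_h,\mathrm{div}\boldsymbol{K}_h)=\|\theta_h\|_0^2\ge\beta_m\,\|\boldsymbol{K}_h\|_{\mathrm{div},\Omega}\|\theta_h\|_0$.

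For the phase-field pair $(X_h,X_h)$ things are easier, since the trial and test spaces coincide and $(\nabla\chi_h,\nabla\psi_h)$ is symmetric: taking $\psi_h=\chi_h$ and applying Cauchy--Schwarz bounds the third quotient below by $\|\nabla\chi_h\|_0^2/\|\chi_h\|_1^2$, and $\beta_c>0$ is then exactly the Poincar\'e--Wirtinger inequality on the zero-mean subspace of $X_h$, which holds with an $h$-independent constant for any $H^1$-conforming Lagrange space because it descends from the continuous inequality. Equivalently, the bound records coercivity of the form modulo constants, which is all the decoupled Cahn--Hilliard solve actually uses.

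The genuinely delicate point is the second pair. Since $\Omega$ is only Lipschitz---not smooth or convex---one cannot invoke $H^2$ elliptic regularity to build the right inverse of the divergence; instead I would use a Bogovskii-type (or Costabel--McIntosh) regularized right inverse that maps $L^2_0(\Omega)$ boundedly into $\boldsymbol{H}^1(\Omega)\cap\boldsymbol{H}_0(\mathrm{div},\Omega)$. This $\boldsymbol{H}^1$ regularity is precisely what makes the $\boldsymbol{H}(\mathrm{div})$-interpolant well defined and $L^2$-stable on $\boldsymbol{K}$ (it is not stable on all of $\boldsymbol{H}(\mathrm{div})$), and one must also check that the interpolant preserves the vanishing normal trace so that $\boldsymbol{K}_h$ stays in $\boldsymbol{D}_h$. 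With these ingredients assembled, $\beta_s$, $\beta_m$, $\beta_c$ are all independent of $h$, as claimed.
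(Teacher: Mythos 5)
Your verification is essentially correct, but note that the paper itself offers no proof of this proposition: it is introduced with ``we assume these spaces satisfy the following inf-sup conditions,'' and the concrete pairs (MINI for $(\boldsymbol{V}_h,Q_h)$, lowest-order Raviart--Thomas with piecewise constants for $(\boldsymbol{D}_h,S_h)$, and $P_1$ Lagrange for $X_h$) only appear later in Section~4 with citations to the standard literature. What you have written is therefore a legitimate filling-in of that gap rather than an alternative to an existing argument, and the route you take is the canonical one: a Fortin operator composed with the continuous right inverse of the divergence for the Stokes pair, the commuting-diagram property of the $\boldsymbol{H}(\mathrm{div})$ interpolant combined with a Bogovskii-type right inverse for the face--volume pair, and coercivity via Poincar\'e for the symmetric form on $(X_h,X_h)$. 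Two of your side remarks are worth keeping because they touch exactly the points the paper glosses over. First, the third inequality is false on all of $H^1(\Omega)$ (constants annihilate the numerator but not the denominator), so the positivity of $\beta_c$ genuinely requires the zero-mean normalization that the paper only imposes on $X_h$ in Section~4 while writing $X=H^1(\Omega)$ in Section~2; your proof makes that dependence explicit. Second, on a merely Lipschitz domain the $H^1$-regular right inverse of the divergence (Bogovskii/Costabel--McIntosh) is indeed needed, since the canonical Raviart--Thomas interpolant is not bounded on all of $\boldsymbol{H}_0(\mathrm{div},\Omega)$ and one cannot fall back on elliptic $H^2$ regularity. The only caveat is that your argument establishes the proposition for these particular element families, whereas the paper phrases it as a blanket hypothesis on whatever conforming spaces are chosen; as a universal statement it is an assumption, not a theorem, and your proof should be read as certifying that the assumption is satisfied by the spaces actually used in the numerics.
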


With these discrete spaces, the semi-discretization formulation of the system (\ref{eq:weak})
is to find $(\boldsymbol{u}_{h},p_{h},\boldsymbol{J}_{h},\phi_{h},\varphi_{h},\mu_{h})\in\boldsymbol{V}_{h}\times Q_{h}\times\boldsymbol{D}_{h}\times S_{h}\times X_{h}\times X_{h}$
such that

\begin{equation}
\begin{aligned}\left((\boldsymbol{u}_{h})_t,\boldsymbol{v}_{h}\right) +2(\eta(\varphi_{h})D(\boldsymbol{u}_{h}),D(\boldsymbol{v}_{h}))+\mathcal{O}(\boldsymbol{u}_{h},\boldsymbol{u}_{h},\boldsymbol{v}_{h})-(p_{h},{\rm div}\boldsymbol{v}_{h})&\\
+(\varphi_{h}\nabla\mu_{h},\boldsymbol{v}_{h})-\left(\boldsymbol{J}_{h}\times\boldsymbol{B}_{h},\boldsymbol{v}_{h}\right) & =\boldsymbol{0},\\
({\rm div}\boldsymbol{u}_{h},q_{h}) & =0,\\
\left(\sigma(\varphi_{h})^{-1}\boldsymbol{J}_{h},\boldsymbol{K}_{h}\right)-\left(\boldsymbol{u}_{h}\times\boldsymbol{B}_{h},\boldsymbol{K}_{h}\right)-(\phi_{h},{\rm div}\boldsymbol{K}_{h}) & =\boldsymbol{0},\\
({\rm div}\boldsymbol{J}_{h},\chi_{h}) & =0,\\
\left((\varphi_{h})_t,\psi_{h}\right) -(\varphi_{h}\boldsymbol{u}_{h},\nabla\psi_{h})+M(\nabla\mu_{h},\nabla\psi_{h}) & =0,\\
\lambda\varepsilon(\nabla\varphi_{h},\nabla\chi_{h})+\frac{\lambda}{\varepsilon}(f(\varphi_{h}),\chi_{h})-(\mu_{h},\chi_{h}) & =0,
\end{aligned}
\label{eq:weakh}
\end{equation}

\noindent for all $(\boldsymbol{v}_{h},q_{h},\boldsymbol{K}_{h},\theta_{h},\psi_{h},\chi_{h})\in\boldsymbol{V}_{h}\times Q_{h}\times\boldsymbol{D}_{h}\times S_{h}\times X_{h}\times X_{h}$.

With similar arguments in Theorem \ref{thm:weakengy}, one can easily get the energy stability of the semi-discrete scheme, thus the details are omitted here.
\begin{thm}
\label{thm:weakhengy}Let $(\boldsymbol{u},p,\boldsymbol{J},\phi,\varphi,\mu)$
be the solution of (\ref{eq:weak}). The energy dissipation law holds
\[
\mathrm{E}(t)+\int_{0}^{t}\mathrm{P}(s)\mathrm{d}s=\mathrm{E}(0)
\]
where 
\begin{align*}
\mathrm{E}(t) & :=\int_{\Omega}\left(\frac{1}{2}|\boldsymbol{u}_{h}|^{2}+\frac{\lambda\epsilon}{2}|\nabla\varphi_{h}|^{2}+\frac{\lambda}{\epsilon}F(\varphi_{h})\right){\rm d}x,\\
\mathrm{P}(t) & :=M\|\nabla\mu_{h}\|_{0}^{2}+2\|\sqrt{\eta(\varphi_{h})}D(\boldsymbol{u}_{h})\|_{0}^{2}+\left\Vert \sqrt{\sigma(\varphi_{h})^{-1}}\boldsymbol{J}_{h}\right\Vert _{0}^{2}.
\end{align*}
\end{thm}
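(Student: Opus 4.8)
The plan is to transcribe the argument of Theorem~\ref{thm:weakengy} verbatim to the discrete level (for the semi-discrete solution of (\ref{eq:weakh})), the crucial point being that $\boldsymbol{V}_h,Q_h,\boldsymbol{D}_h,S_h,X_h$ are conforming and, moreover, $X_h$ is time-independent, so that every test function needed in the continuous proof remains admissible. Concretely, I would take
$(\boldsymbol{v}_h,q_h,\boldsymbol{K}_h,\theta_h,\psi_h,\chi_h)=(\boldsymbol{u}_h,p_h,\boldsymbol{J}_h,\phi_h,\mu_h,(\varphi_h)_t)$
in the six equations of (\ref{eq:weakh}); here $(\varphi_h)_t\in X_h$ precisely because $X_h$ is a fixed finite-dimensional space, which is the one spot where one must pause before copying the PDE-level computation.

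With these choices each equation simplifies just as in the continuous proof: in the momentum equation the trilinear term drops, $\mathcal{O}(\boldsymbol{u}_h,\boldsymbol{u}_h,\boldsymbol{u}_h)=0$, by the skew-symmetry of $\mathcal{O}$ in its last two arguments, while $(p_h,\mathrm{div}\,\boldsymbol{u}_h)=0$ and $(\phi_h,\mathrm{div}\,\boldsymbol{J}_h)=0$ follow from the discrete incompressibility and charge-conservation equations tested with $q_h=p_h$ and $\chi_h=\phi_h$. Adding the resulting momentum, Ohm, Cahn--Hilliard and chemical-potential identities, the coupling terms cancel pairwise exactly as before: the Lorentz and Ohm contributions cancel through the pointwise scalar triple-product identity $(\boldsymbol{J}_h\times\boldsymbol{B}_h)\cdot\boldsymbol{u}_h=-(\boldsymbol{u}_h\times\boldsymbol{B}_h)\cdot\boldsymbol{J}_h$; the surface-tension term $(\varphi_h\nabla\mu_h,\boldsymbol{u}_h)$ cancels the transport term $-(\varphi_h\boldsymbol{u}_h,\nabla\mu_h)$; and $((\varphi_h)_t,\mu_h)$ cancels $-(\mu_h,(\varphi_h)_t)$. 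What survives is
\[
\left((\boldsymbol{u}_h)_t,\boldsymbol{u}_h\right)+\lambda\varepsilon\left(\nabla\varphi_h,\nabla(\varphi_h)_t\right)+\frac{\lambda}{\varepsilon}\left(f(\varphi_h),(\varphi_h)_t\right)+2\|\sqrt{\eta(\varphi_h)}D(\boldsymbol{u}_h)\|_{0}^{2}+\left\Vert\sqrt{\sigma(\varphi_h)^{-1}}\boldsymbol{J}_h\right\Vert_{0}^{2}+M\|\nabla\mu_h\|_{0}^{2}=0.
\]

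To finish I would rewrite the first three terms as total time derivatives, $((\boldsymbol{u}_h)_t,\boldsymbol{u}_h)=\frac12\frac{d}{dt}\|\boldsymbol{u}_h\|_0^2$, $\lambda\varepsilon(\nabla\varphi_h,\nabla(\varphi_h)_t)=\frac{\lambda\varepsilon}{2}\frac{d}{dt}\|\nabla\varphi_h\|_0^2$, and $\frac{\lambda}{\varepsilon}(f(\varphi_h),(\varphi_h)_t)=\frac{\lambda}{\varepsilon}\frac{d}{dt}\int_\Omega F(\varphi_h)\,\mathrm{d}x$ (legitimate since $f=F'$ with $F\in C^1$ and $\varphi_h$ is differentiable in time), so that the identity becomes $\frac{d}{dt}\mathrm{E}(t)+\mathrm{P}(t)=0$; integrating over $(0,t)$ yields the claim. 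I do not expect any genuine obstacle: the only non-cosmetic points are the admissibility of $(\varphi_h)_t$ as a discrete test function (handled by the finite-dimensionality of $X_h$) and the pointwise vector identity for the magnetic coupling, and beyond these the proof is an exact copy of that of Theorem~\ref{thm:weakengy}, which is why the text omits it.
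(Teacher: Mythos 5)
Your proposal is correct and is exactly what the paper intends: the paper omits the proof of Theorem \ref{thm:weakhengy}, stating only that it follows by the same argument as Theorem \ref{thm:weakengy}, and your transcription of that argument to the conforming semi-discrete setting --- choosing the test tuple $(\boldsymbol{u}_h,p_h,\boldsymbol{J}_h,\phi_h,\mu_h,(\varphi_h)_t)$, with the explicit observation that $(\varphi_h)_t\in X_h$ remains admissible because $X_h$ is a fixed finite-dimensional space --- is the intended and valid proof. All the cancellations you identify (skew-symmetry of $\mathcal{O}$, the pressure and electric-potential terms, the Lorentz/Ohm triple-product identity, and the surface-tension/transport pairing) carry over verbatim, so there is no gap.
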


Let $\left\{ t_{n}=n\tau:\,\, n=0,1,\cdots,N\right\} ,\tau=T/N,$
be an equidistant partition of the time interval $[0,T].$ For any
time dependent function $\omega\left(x,t\right)$, the full discrete
approximation to $\omega\left(t_{n}\right)$ will be denoted by $\omega_{h}^{n}$. A fully discrete mixed finite element scheme for problem (\ref{eq:weak}) reads as follows: 

Given the initial guess $\boldsymbol{u}^{0}$ and $\phi^{0}$, we
compute $$\left(\boldsymbol{u}_{h}^{n+1},p_{h}^{n+1},\boldsymbol{J}_{h}^{n+1},\varphi_{h}^{n+1},\phi_{h}^{n+1},\mu_{h}^{n+1}\right),\,n=0,1,\cdots,N-1,$$ by the following three steps.\\
\textbf{Step 1:} Find $\left(\varphi_{h}^{n+1},\mu_{h}^{n+1}\right)\in X_{h}\times X_{h}$
such that
\begin{equation}
\begin{cases}
\left(\delta_{t}\varphi_{h}^{n+1},\psi_{h}\right)+M\left(\nabla\mu_{h}^{n+1},\nabla\psi_{h}\right)+\tau\left(\varphi_{h}^{n}\nabla\mu_{h}^{n+1},\varphi_{h}^{n}\nabla\psi_{h}\right) & =\left(\varphi_{h}^{n}\boldsymbol{u}_{h}^{n},\nabla\psi_{h}\right),\\
\lambda\varepsilon\left(\nabla\varphi_{h}^{n+1},\nabla\chi_{h}\right)+\left(\frac{\lambda}{\epsilon}(\varphi_{h}^{n+1}-\varphi_{h}^{n}),\chi_{h}\right)\\
+\left(\frac{\lambda}{\varepsilon}f(\varphi^{n}),\chi_{h}\right)-\left(\mu_{h}^{n+1},\chi_{h}\right) & =0,
\end{cases}\label{eq:CHt}
\end{equation}
for all $(\psi_{h},\chi_{h})\in X_{h}\times X_{h}$.\\
\textbf{Step 2:} Find $\left(\boldsymbol{J}_{h}^{n+1},\phi_{h}^{n+1}\right)\in\boldsymbol{D}_{h}\times S_{h}$
such that
\begin{equation}
\begin{cases}
\left(\sigma\left(\varphi_{h}^{n+1}\right)^{-1}\boldsymbol{J}_{h}^{n+1},\boldsymbol{K}_{h}\right)+\tau\left(\boldsymbol{J}_{h}^{n+1}\times\boldsymbol{B},\boldsymbol{K}_{h}^{n+1}\times\boldsymbol{B}\right)\\
-\left(\phi_{h}^{n+1},\mathrm{div}\boldsymbol{K}_{h}\right)-\tau\left(\varphi_{h}^{n+1}\nabla\mu_{h}^{n+1}\times\boldsymbol{B},\boldsymbol{K}_{h}^{n+1}\right) & =\left(\boldsymbol{u}_{h}^{n}\times\boldsymbol{B},\boldsymbol{K}_{h}^{n+1}\right),\\
\left(\mathrm{div}\boldsymbol{J}_{h}^{n+1},\theta_{h}\right) & =0,
\end{cases}\label{eq:Jphit}
\end{equation}
for all $(\boldsymbol{K}_{h},\theta_{h})\in\boldsymbol{D}_{h}\times S_{h}$.\\
\textbf{Step 3:} Find $\left(\boldsymbol{u}^{n+1},p^{n+1}\right)\in\boldsymbol{V}_{h}\times Q_{h}$
such that
\begin{equation}
\begin{cases}
(\delta_{t}\boldsymbol{u}_{h}^{n+1},\boldsymbol{v}_{h})+O(\boldsymbol{u}_{h}^{n},\boldsymbol{u}_{h}^{n+1},\boldsymbol{v}_{h})-2\left(\eta\left(\varphi_h^{n+1}\right)D\left(\boldsymbol{u}_{h}^{n+1}\right),D\left(\boldsymbol{v}_{h}^{n+1}\right)\right)\\
\qquad-\left(p_{h}^{n+1},{\rm div}\boldsymbol{v}_{h}\right)  +\left(\varphi_{h}^{n+1}\nabla\mu_{h}^{n+1},\boldsymbol{v}_{h}\right)+\left(\boldsymbol{J}_{h}^{n+1}\times\boldsymbol{B},\boldsymbol{v}_{h}\right)&=\boldsymbol{0},\\
\left(\mathrm{div}\boldsymbol{u}^{n+1},q_{h}\right) & =0,
\end{cases}\label{eq:NSt}
\end{equation}
for all $(\boldsymbol{v}_{h},q_{h})\in\boldsymbol{V}_{h}\times Q_{h}$.

Before we get into the discussion on the properties of this scheme, several remarks about the scheme are given in order.

\begin{rem}
	To decouple the nonlinear coupled multiphysics system, we introduce two first -order stabilization terms. The first stabilization term $\tau(\varphi^n_{h}\nabla \mu^n_{h},\varphi^n_{h}\nabla \psi_{h})$ in step 1 is to decouple Cahn--Hilliard equations and Navier--Stokes equations \cite{shen_numerical_2010}. The second stabilization term $\tau\left(\boldsymbol{J}_{h}^{n+1}\times\boldsymbol{B},\boldsymbol{K}_{h}\times\boldsymbol{B}\right)$ in step 2 is to decouple Poisson equation and Navier--Stokes equations \cite{Zhang2020}. These extra two stabilization terms are vital to  keep the couping term explicitly while preserving the energy stability, see the proof of Theorem \ref{thm:weakhengy}.
\end{rem}

\begin{rem}\label{rem:stabch}
In step 1, we employ the stabilized method \cite{shen_numerical_2010} to treat the double-well potential $F(\varphi)$ explicitly without suffering from any time step constraint. Note that this stabilizing term $\frac{\lambda}{\epsilon}(\varphi^{n+1}_{h}-\varphi^n_{h})$ introduces an extra consistent error of order $\tau$, which is the same order as the overall truncation error of the scheme. Note that there are many other efficient methods on constructing energy stable schemes for the Cahn-Hilliard equation, such as, convex-splitting method   \cite{eyre_unconditionally_nodate}, invariant energy quadratization method \cite{Yang2017}, and scalar auxiliary variable method \cite{Shen2019}. Here we adopt the stabilized explicit method only for simplicity, and it is easy to extend the scheme to the methods mentioned above. 
\end{rem}

\begin{rem}
In this paper, we only focus on the decoupling of multiphysics problem rather the decoupling of all variable in each physical problem. The velocity and pressure in step 3 can be further decoupled by using the first order pressure correction scheme \cite{shen_decoupled_2015}, and we leave it to the interested readers.
\end{rem}

It is clear that the scheme given by (\ref{eq:CHt})-(\ref{eq:NSt}) is a decoupled, linear scheme. Next we want to show that the scheme is mass-conservative and charge-conservative.
\begin{prop}
Let $\left(\boldsymbol{u}_{h}^{m},p_{h}^{m},\boldsymbol{J}_{h}^{m},\phi_{h}^{m},\varphi_{h}^{m},\mu_{h}^{m}\right)$
solve (\ref{eq:CHt})-(\ref{eq:NSt}) for any $1\le m\le N$, then
the scheme is mass-conservative and charge-conservative, namely, 
\[
\int_{\Omega}\varphi_{h}^{m}\mathrm{~d}x=\int_{\Omega}\varphi_{h}^{0}\mathrm{~d}x,\quad{\rm div}\boldsymbol{J}_{h}^{m}=0.
\]
\end{prop}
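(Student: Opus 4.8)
The plan is to establish the two conservation properties separately, each by a single judicious choice of test function in the fully discrete scheme, followed by summation over time steps.

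For mass conservation, I would test the first equation of Step~1, equation~(\ref{eq:CHt}), with the constant function $\psi_h = 1 \in X_h$. Every gradient term then vanishes: $M(\nabla\mu_h^{n+1},\nabla 1) = 0$, the stabilization term $\tau(\varphi_h^n\nabla\mu_h^{n+1},\varphi_h^n\nabla 1)=0$, and the right-hand side $(\varphi_h^n\boldsymbol{u}_h^n,\nabla 1)=0$. What survives is $(\delta_t\varphi_h^{n+1},1)=0$, i.e. $\int_\Omega \varphi_h^{n+1}\,\mathrm{d}x = \int_\Omega \varphi_h^{n}\,\mathrm{d}x$. Summing this identity from $n=0$ to $n=m-1$ telescopes to $\int_\Omega \varphi_h^{m}\,\mathrm{d}x = \int_\Omega \varphi_h^{0}\,\mathrm{d}x$, which is the claimed mass conservation.

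For charge conservation I would use the second equation of Step~2, equation~(\ref{eq:Jphit}), namely $(\operatorname{div}\boldsymbol{J}_h^{n+1},\theta_h)=0$ for all $\theta_h\in S_h$. The key structural point is that the finite element pair $(\boldsymbol{D}_h,S_h)$ is a face-volume (e.g.\ Raviart--Thomas / discontinuous) pair for which $\operatorname{div}\boldsymbol{D}_h\subseteq S_h$; consequently $\operatorname{div}\boldsymbol{J}_h^{n+1}$ itself is an admissible test function $\theta_h$, and choosing it yields $\|\operatorname{div}\boldsymbol{J}_h^{n+1}\|_0^2 = 0$, hence $\operatorname{div}\boldsymbol{J}_h^{n+1}=0$ pointwise in $\Omega$. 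Since this holds for every $n$, in particular it holds for the index $m$, giving $\operatorname{div}\boldsymbol{J}_h^{m}=0$.

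The main subtlety — really the only place care is needed — is the charge-conservation argument: it relies on the inclusion $\operatorname{div}\boldsymbol{D}_h\subseteq S_h$ so that $\operatorname{div}\boldsymbol{J}_h^{n+1}$ may legitimately be taken as a test function in $S_h$. This is exactly why the authors insist on inf-sup stable \emph{face-volume} pairs (such as $\mathbf{RT}_k/P_k^{\mathrm{disc}}$ or $\mathbf{BDM}_k/P_{k-1}^{\mathrm{disc}}$) rather than, say, nodal $H(\operatorname{div})$-nonconforming discretizations; I would state this containment explicitly as the hypothesis that makes the argument go through. The mass-conservation part has no obstacle at all, since $1\in X_h = H^1$-conforming space by construction. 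No use of the momentum or current-density equations, nor of the stabilization terms, is needed beyond these two test-function choices.
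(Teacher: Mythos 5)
Your proposal is correct and follows essentially the same route as the paper: test the first Cahn--Hilliard equation with $\psi_h=1$ and telescope for mass conservation, and take $\theta_h=\operatorname{div}\boldsymbol{J}_h^{m}$ in the divergence constraint, using $\operatorname{div}\boldsymbol{D}_h\subseteq S_h$, for charge conservation. Your explicit remark on the face-volume inclusion is a useful clarification of a point the paper leaves implicit, but it is the same argument.
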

\begin{proof}
Letting $\psi_{h}=1$ in the first equation of (\ref{eq:CHt}), we
have mass conservation $\int_{\Omega}\varphi_{h}^{m}\mathrm{~d}x=\int_{\Omega}\varphi_{h}^{0}\mathrm{~d}x$.
Then, we note that for all $\theta_{h}\in S_{h}$, there holds
\[
\left(\theta_{h},{\rm div}\boldsymbol{J}_{h}^m\right)=0,
\]
and ${\rm div}\boldsymbol{J}_{h}^m\in S_{h}$. Taking $\theta_{h}={\rm div}\boldsymbol{J}_{h}^m$,
we obtain ${\rm div}\boldsymbol{J}_{h}^m=0$.
\end{proof}

Now, we are in a position to prove the unconditional energy stability of the decoupled scheme as follows, which is analogous to that of the original problem in Theorem \ref{thm:weakengy}.
\begin{thm}
\label{thm:SeEnergyLaws} The decoupled scheme is unconditionally
energy stable in the sense that the following energy estimate
\begin{equation}
\delta_{t}\mathrm{E}^{n+1}+\mathrm{P}^{n}\le0\quad\forall n\ge0,\label{eq:SemiEn}
\end{equation}
 holds, where 
\begin{align*}
\mathrm{E}^{n+1} & :=\frac{1}{2}\|\boldsymbol{u}_{h}^{n+1}\|_{0}^{2}+\frac{\lambda\epsilon}{2}\|\nabla\varphi_{h}^{n+1}\|_{0}^{2}+\frac{\lambda}{\epsilon}\left(F\left(\varphi_{h}^{n+1}\right),1\right),\\
\mathrm{P}^{n} & :=M\|\nabla\mu_{h}^{n+1}\|_{0}^{2}+2\|\sqrt{\eta(\varphi_{h}^{n+1})}D(\boldsymbol{u}_{h}^{n+1})\|_{0}^{2}+\left\Vert \sqrt{\sigma(\varphi_{h}^{n+1})^{-1}}\boldsymbol{J}_{h}^{n+1}\right\Vert _{0}^{2}.
\end{align*}
\end{thm}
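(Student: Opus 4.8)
The plan is to transport the cancellation structure behind the continuous energy law (Theorem~\ref{thm:weakengy}) to the fully discrete scheme: I would test each of the three decoupled sub-problems (\ref{eq:CHt})--(\ref{eq:NSt}) with the discrete analogue of the energy-conjugate variable used in that proof and add the resulting identities, so that the two first-order stabilization terms absorb the errors produced by the time splitting. \textbf{Step~1.} Take $\psi_h=\mu_h^{n+1}$ in the first equation of (\ref{eq:CHt}) and $\chi_h=\delta_t\varphi_h^{n+1}$ in the second, and add; the two copies of $(\mu_h^{n+1},\delta_t\varphi_h^{n+1})$ cancel. The polarization identity $2(a,a-b)=\|a\|_0^2-\|b\|_0^2+\|a-b\|_0^2$ turns $\lambda\varepsilon(\nabla\varphi_h^{n+1},\nabla\delta_t\varphi_h^{n+1})$ into $\delta_t(\tfrac{\lambda\varepsilon}{2}\|\nabla\varphi_h^{n+1}\|_0^2)$ plus the nonnegative term $\tfrac{\lambda\varepsilon}{2\tau}\|\nabla(\varphi_h^{n+1}-\varphi_h^n)\|_0^2$, while Taylor's theorem $F(\varphi_h^{n+1})=F(\varphi_h^n)+f(\varphi_h^n)(\varphi_h^{n+1}-\varphi_h^n)+\tfrac12F''(\xi)(\varphi_h^{n+1}-\varphi_h^n)^2$ gives $\tfrac{\lambda}{\varepsilon}(f(\varphi_h^n),\delta_t\varphi_h^{n+1})=\delta_t\big(\tfrac{\lambda}{\varepsilon}(F(\varphi_h^{n+1}),1)\big)-\tfrac{\lambda}{2\varepsilon\tau}(F''(\xi)(\varphi_h^{n+1}-\varphi_h^n)^2,1)$; since $|F''|\le L=2$ this remainder is exactly controlled by the linear stabilizer $\tfrac{\lambda}{\varepsilon\tau}\|\varphi_h^{n+1}-\varphi_h^n\|_0^2$. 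This produces a ``Cahn--Hilliard sub-law''
\begin{align*}
\delta_t\Big(\tfrac{\lambda\varepsilon}{2}\|\nabla\varphi_h^{n+1}\|_0^2+\tfrac{\lambda}{\varepsilon}(F(\varphi_h^{n+1}),1)\Big)&+M\|\nabla\mu_h^{n+1}\|_0^2+\tau\|\varphi_h^n\nabla\mu_h^{n+1}\|_0^2\\
&+\tfrac{\lambda\varepsilon}{2\tau}\|\nabla(\varphi_h^{n+1}-\varphi_h^n)\|_0^2\le(\varphi_h^n\boldsymbol{u}_h^n,\nabla\mu_h^{n+1}).
\end{align*}

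\textbf{Steps~2 and~3.} Take $\boldsymbol{K}_h=\boldsymbol{J}_h^{n+1}$ in the first equation of (\ref{eq:Jphit}); since $\phi_h^{n+1}\in S_h$ and $\mathrm{div}\,\boldsymbol{J}_h^{n+1}=0$ (the charge-conservation property just proved), the potential term drops and one obtains $\|\sqrt{\sigma(\varphi_h^{n+1})^{-1}}\boldsymbol{J}_h^{n+1}\|_0^2+\tau\|\boldsymbol{J}_h^{n+1}\times\boldsymbol{B}\|_0^2=(\boldsymbol{u}_h^n\times\boldsymbol{B},\boldsymbol{J}_h^{n+1})+\tau(\varphi_h^{n+1}\nabla\mu_h^{n+1}\times\boldsymbol{B},\boldsymbol{J}_h^{n+1})$. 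Take $\boldsymbol{v}_h=\boldsymbol{u}_h^{n+1}$ and $q_h=p_h^{n+1}$ in (\ref{eq:NSt}); the skew symmetry $O(\boldsymbol{w},\boldsymbol{v},\boldsymbol{v})=0$ removes the convection term, $(\mathrm{div}\,\boldsymbol{u}_h^{n+1},p_h^{n+1})=0$ removes the pressure term, and $(\delta_t\boldsymbol{u}_h^{n+1},\boldsymbol{u}_h^{n+1})=\delta_t(\tfrac12\|\boldsymbol{u}_h^{n+1}\|_0^2)+\tfrac1{2\tau}\|\boldsymbol{u}_h^{n+1}-\boldsymbol{u}_h^n\|_0^2$, leaving $\delta_t(\tfrac12\|\boldsymbol{u}_h^{n+1}\|_0^2)+\tfrac1{2\tau}\|\boldsymbol{u}_h^{n+1}-\boldsymbol{u}_h^n\|_0^2+2\|\sqrt{\eta(\varphi_h^{n+1})}D(\boldsymbol{u}_h^{n+1})\|_0^2+(\varphi_h^{n+1}\nabla\mu_h^{n+1},\boldsymbol{u}_h^{n+1})=(\boldsymbol{J}_h^{n+1}\times\boldsymbol{B},\boldsymbol{u}_h^{n+1})$.

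\textbf{Closing the estimate.} Adding the Cahn--Hilliard sub-law to the Step~2 and Step~3 identities, the three physical dissipations assemble into $\mathrm{P}^n$ and the time increments into $\delta_t\mathrm{E}^{n+1}$, so it only remains to dominate the surviving coupling terms by the nonnegative ``splitting remainders'' $\tfrac1{2\tau}\|\boldsymbol{u}_h^{n+1}-\boldsymbol{u}_h^n\|_0^2$, $\tau\|\boldsymbol{J}_h^{n+1}\times\boldsymbol{B}\|_0^2$, $\tau\|\varphi_h^n\nabla\mu_h^{n+1}\|_0^2$ and $\tfrac{\lambda\varepsilon}{2\tau}\|\nabla(\varphi_h^{n+1}-\varphi_h^n)\|_0^2$. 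Using the triple-product identity $(\boldsymbol{a}\times\boldsymbol{B})\cdot\boldsymbol{c}=-(\boldsymbol{c}\times\boldsymbol{B})\cdot\boldsymbol{a}$ and regrouping, the Lorentz couplings $(\boldsymbol{J}_h^{n+1}\times\boldsymbol{B},\boldsymbol{u}_h^{n+1})$ and $-(\boldsymbol{u}_h^n\times\boldsymbol{B},\boldsymbol{J}_h^{n+1})$ collapse to the lag term $(\boldsymbol{J}_h^{n+1}\times\boldsymbol{B},\boldsymbol{u}_h^{n+1}-\boldsymbol{u}_h^n)$, and, once the Step~2 correction $\tau(\varphi_h^{n+1}\nabla\mu_h^{n+1}\times\boldsymbol{B},\boldsymbol{J}_h^{n+1})$ is taken into account, the surface-tension couplings $(\varphi_h^{n+1}\nabla\mu_h^{n+1},\boldsymbol{u}_h^{n+1})$ and $-(\varphi_h^n\boldsymbol{u}_h^n,\nabla\mu_h^{n+1})$ reduce likewise to a lag term tested against $\boldsymbol{u}_h^{n+1}-\boldsymbol{u}_h^n$. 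The Cauchy--Schwarz/Young inequality $(\boldsymbol{u}_h^{n+1}-\boldsymbol{u}_h^n,\boldsymbol{g})\le\tfrac1{2\tau}\|\boldsymbol{u}_h^{n+1}-\boldsymbol{u}_h^n\|_0^2+\tfrac{\tau}{2}\|\boldsymbol{g}\|_0^2$ with $\boldsymbol{g}=\boldsymbol{J}_h^{n+1}\times\boldsymbol{B}-\varphi_h^n\nabla\mu_h^{n+1}$, together with $\|\boldsymbol{g}\|_0^2\le2\|\boldsymbol{J}_h^{n+1}\times\boldsymbol{B}\|_0^2+2\|\varphi_h^n\nabla\mu_h^{n+1}\|_0^2$, then absorbs these lag errors exactly into the splitting remainders, and (\ref{eq:SemiEn}) follows.

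The step I expect to be the main obstacle is precisely this last piece of bookkeeping. One must keep exact track of the sign and time level of every coupling term (Lorentz force, surface-tension force, induction term and the two stabilization/correction terms) so that, after the triple-product and divergence identities are used, everything genuinely telescopes into the increments $\boldsymbol{u}_h^{n+1}-\boldsymbol{u}_h^n$ and $\varphi_h^{n+1}-\varphi_h^n$; one then has to verify that the two stabilization weights (both equal to $\tau$) and the numerical-dissipation weight $\tfrac1{2\tau}$ stand in exactly the proportion required for the Young inequalities to close with no slack, and, similarly, that $L=2$ is just small enough for the potential stabilizer to dominate the Taylor remainder. The conceptual crux is the role of the auxiliary term $\tau(\varphi_h^{n+1}\nabla\mu_h^{n+1}\times\boldsymbol{B},\cdot)$ in Step~2: it compensates, inside the Poisson update, for the surface-tension force that the Navier--Stokes update receives explicitly, and this is exactly what makes the magnetic and surface-tension couplings telescope simultaneously.
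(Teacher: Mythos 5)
Your strategy coincides with the paper's own proof: the same test functions $(\mu_h^{n+1},\delta_t\varphi_h^{n+1})$, $(\boldsymbol{J}_h^{n+1},\phi_h^{n+1})$, $(\boldsymbol{u}_h^{n+1},p_h^{n+1})$, the same polarization and Taylor identities with $L=2$ controlled by the stabilizer $\frac{\lambda}{\epsilon\tau}\|\varphi_h^{n+1}-\varphi_h^{n}\|_0^2$, and the same regrouping of the couplings into a lag term tested against $\boldsymbol{u}_h^{n+1}-\boldsymbol{u}_h^{n}$. There is, however, one concrete gap in your closing step. After summing the three identities, the right-hand side is not only the lag term $(\boldsymbol{g},\boldsymbol{u}_h^{n+1}-\boldsymbol{u}_h^{n})$ with $\boldsymbol{g}=\boldsymbol{J}_h^{n+1}\times\boldsymbol{B}-\varphi_h^{n}\nabla\mu_h^{n+1}$: the Step~2 correction leaves behind an additional, uncancelled coupling $\tau(\varphi_h^{n}\nabla\mu_h^{n+1},\boldsymbol{J}_h^{n+1}\times\boldsymbol{B})$. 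That term plays no role in forming the lag terms --- those come solely from pairing $(\boldsymbol{J}_h^{n+1}\times\boldsymbol{B},\boldsymbol{u}_h^{n+1})$ with $(\boldsymbol{u}_h^{n}\times\boldsymbol{B},\boldsymbol{J}_h^{n+1})$, and $(\varphi\nabla\mu_h^{n+1},\boldsymbol{u}_h^{n+1})$ with $(\varphi_h^{n}\boldsymbol{u}_h^{n},\nabla\mu_h^{n+1})$. Your bound $\frac{\tau}{2}\|\boldsymbol{g}\|_0^2\le\tau\|\boldsymbol{J}_h^{n+1}\times\boldsymbol{B}\|_0^2+\tau\|\varphi_h^{n}\nabla\mu_h^{n+1}\|_0^2$ then consumes the entire stabilization budget on the left-hand side, leaving nothing with which to control that extra $O(\tau)$ coupling, so the estimate does not close as written.

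The repair is exactly what the paper does in (\ref{eq:Semirhs1}): do not discard the cross term. Expanding
\[
\frac{\tau}{2}\|\boldsymbol{g}\|_0^2=\frac{\tau}{2}\left\Vert\boldsymbol{J}_h^{n+1}\times\boldsymbol{B}\right\Vert_0^2-\tau\left(\varphi_h^{n}\nabla\mu_h^{n+1},\boldsymbol{J}_h^{n+1}\times\boldsymbol{B}\right)+\frac{\tau}{2}\left\Vert\varphi_h^{n}\nabla\mu_h^{n+1}\right\Vert_0^2,
\]
the cross term cancels the leftover Step~2 coupling exactly, only half of each stabilization term is spent, and (\ref{eq:SemiEn}) follows after dropping the remaining nonnegative pieces. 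This is precisely the ``no slack'' proportion you anticipated; the point is that the correction term $\tau(\varphi\nabla\mu_h^{n+1}\times\boldsymbol{B},\cdot)$ is there to cancel the Young cross term, not to help build the lag term. A minor further caveat: for the surface-tension couplings to telescope, the factor $\varphi\nabla\mu_h^{n+1}$ must carry the same time index in all three steps; your write-up (like the scheme as printed) mixes $\varphi_h^{n}$ and $\varphi_h^{n+1}$, and the Step~2 correction cannot bridge that mismatch since it is paired with $\boldsymbol{J}_h^{n+1}\times\boldsymbol{B}$ rather than with $\boldsymbol{u}_h^{n+1}-\boldsymbol{u}_h^{n}$.
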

\begin{proof}
Letting $\left(\psi_{h},\chi_{h}\right)=\left(\mu_{h}^{n+1},\delta_{t}\varphi_{h}^{n+1}\right)$
in (\ref{eq:CHt}), we have
\begin{align}
\left(\delta_{t}\varphi_{h}^{n+1},\mu_{h}^{n+1}\right)+\tau\|\varphi_{h}^{n}\nabla\mu_{h}^{n+1}\|^{2}+M\|\nabla\mu_{h}^{n+1}\|^{2} & =(\varphi_{h}^{n}\boldsymbol{u}_{h}^{n},\nabla\mu_{h}^{n+1}),\label{eq:SemiCHen1}\\
\lambda\varepsilon(\nabla\varphi_{h}^{n+1},\nabla\delta_{t}\varphi_{h}^{n+1})+\frac{\lambda}{\varepsilon}(f(\varphi_{h}^{n}),\delta_{t}\varphi_{h}^{n+1})+\frac{1}{\epsilon\tau}\|\varphi_{h}^{n+1}-\varphi_{h}^{n}\|^{2} & =(\mu_{h}^{n+1},\delta_{t}\varphi_{h}^{n+1}).\label{eq:SemiCHen2}
\end{align}

Next, setting $\left(\boldsymbol{K}_{h},\theta_{h}\right)=\left(\boldsymbol{J}_{h}^{n+1},\phi_{h}^{n+1}\right)$,
we get
\begin{equation}
\left\Vert \sqrt{\sigma(\varphi_{h}^{n+1})^{-1}}\boldsymbol{J}_{h}^{n+1}\right\Vert _{0}^{2}+\tau\left\Vert \boldsymbol{J}_{h}^{n+1}\times\boldsymbol{B}\right\Vert _{0,\Omega}^{2}=-(\boldsymbol{u}_{h}^{n},\boldsymbol{J}_{h}^{n+1}\times\boldsymbol{B})+\tau(\varphi_{h}^{n}\nabla\mu_{h}^{n+1},\boldsymbol{J}_{h}^{n+1}\times\boldsymbol{B}).\label{eq:SemiJen}
\end{equation}

Then, taking $\left(\boldsymbol{v}_{h},q_{h}\right)=\left(\boldsymbol{u}_{h}^{n+1},p_{h}^{n+1}\right)$,
and using the identity 
\[
\left(\delta_{t}\boldsymbol{u}_{h}^{n+1},\boldsymbol{u}_{h}^{n+1}\right)=\frac{1}{2\tau}\left(\left\Vert \boldsymbol{u}_{h}^{n+1}\right\Vert _{0,\Omega}^{2}-\left\Vert \boldsymbol{u}_{h}^{n}\right\Vert _{0,\Omega}^{2}+\left\Vert \boldsymbol{u}_{h}^{n+1}-\boldsymbol{u}_{h}^{n}\right\Vert _{0,\Omega}^{2}\right),
\]
\noindent it yields
\begin{align}
&\frac{1}{2\tau}\left(\left\Vert \boldsymbol{u}_{h}^{n+1}\right\Vert _{0,\Omega}^{2}-\left\Vert \boldsymbol{u}_{h}^{n}\right\Vert _{0,\Omega}^{2}+\left\Vert \boldsymbol{u}_{h}^{n+1}-\boldsymbol{u}_{h}^{n}\right\Vert _{0,\Omega}^{2}\right)+2\|\sqrt{\eta(\varphi_{h}^{n+1})}D(\boldsymbol{u}_{h}^{n+1})\|_{0}^{2}\nonumber\\
&\qquad=\left(\boldsymbol{J}_{h}^{n+1}\times\boldsymbol{B}_{h},\boldsymbol{u}_{h}^{n+1}\right)-\left(\varphi_{h}^{n}\nabla\mu_{h}^{n+1},\boldsymbol{u}_{h}^{n+1}\right).\label{eq:Semiuen}
\end{align}

\noindent By making the summations of (\ref{eq:SemiCHen1})-(\ref{eq:Semiuen}),
we obtain
\begin{align}
&\frac{1}{2\tau}\left(\left\Vert \boldsymbol{u}_{h}^{n+1}\right\Vert _{0,\Omega}^{2}-\left\Vert \boldsymbol{u}_{h}^{n}\right\Vert _{0,\Omega}^{2}+\left\Vert \boldsymbol{u}_{h}^{n+1}-\boldsymbol{u}_{h}^{n}\right\Vert _{0,\Omega}^{2}\right)+2\|\sqrt{\eta(\varphi_{h}^{n+1})}D(\boldsymbol{u}_{h}^{n+1})\|_{0}^{2}\nonumber \\
&+\left\Vert \sqrt{\sigma(\varphi_{h}^{n+1})^{-1}}\boldsymbol{J}_{h}^{n+1}\right\Vert _{0}^{2}+\tau\left\Vert \boldsymbol{J}_{h}^{n+1}\times\boldsymbol{B}\right\Vert _{0,\Omega}^{2}+\tau\|\varphi_{h}^{n}\nabla\mu_{h}^{n+1}\|^{2}+M\|\nabla\mu_{h}^{n+1}\|^{2}\nonumber \\
&=\left(\boldsymbol{J}_{h}^{n+1}\times\boldsymbol{B}_{h}-\varphi_{h}^{n}\nabla\mu_{h}^{n+1},\boldsymbol{u}_{h}^{n+1}-\boldsymbol{u}_{h}^{n}\right)\nonumber \\
&\quad+\tau(\varphi_{h}^{n}\nabla\mu_{h}^{n+1},\boldsymbol{J}_{h}^{n+1}\times\boldsymbol{B}_{h})\nonumber \\
&\quad-\frac{\lambda}{\varepsilon}(f(\varphi_{h}^{n}),\delta_{t}\varphi_{h}^{n+1})-\frac{1}{\epsilon\tau}\|\varphi_{h}^{n+1}-\varphi_{h}^{n}\|^{2}.\label{eq:Semisum}
\end{align}

Using Young inequality, we derive the right hand side of the first
term of (\ref{eq:Semisum}) has the following estimate,
\begin{align}
&\left(\boldsymbol{J}_{h}^{n+1}\times\boldsymbol{B}_{h}-\varphi_{h}^{n}\nabla\mu_{h}^{n+1},\boldsymbol{u}_{h}^{n+1}-\boldsymbol{u}_{h}^{n}\right)\nonumber \\
& \le\left\Vert \boldsymbol{J}_{h}^{n+1}\times\boldsymbol{B}_{h}-\varphi_{h}^{n}\nabla\mu_{h}^{n+1}\right\Vert _{0,\Omega}\left\Vert \boldsymbol{u}_{h}^{n+1}-\boldsymbol{u}_{h}^{n}\right\Vert _{0,\Omega}\nonumber \\
 & \le\frac{\tau}{2}\left\Vert \boldsymbol{J}_{h}^{n+1}\times\boldsymbol{B}_{h}-\varphi_{h}^{n}\nabla\mu_{h}^{n+1}\right\Vert _{0,\Omega}^{2}+\frac{1}{2\tau}\left\Vert \boldsymbol{u}_{h}^{n+1}-\boldsymbol{u}_{h}^{n}\right\Vert _{0,\Omega}^{2}\nonumber\\
 & =\frac{\tau}{2}\left\Vert \boldsymbol{J}_{h}^{n+1}\times\boldsymbol{B}_{h}\right\Vert _{0,\Omega}^{2}-\tau(\varphi_{h}^{n}\nabla\mu_{h}^{n+1},\boldsymbol{J}_{h}^{n+1}\times\boldsymbol{B}_{h})\nonumber \\
 & \quad+\frac{\tau}{2}\left\Vert \varphi_{h}^{n}\nabla\mu_{h}^{n+1}\right\Vert _{0,\Omega}^{2}+\frac{1}{2\tau}\left\Vert \boldsymbol{u}_{h}^{n+1}-\boldsymbol{u}_{h}^{n}\right\Vert _{0,\Omega}^{2}.\label{eq:Semirhs1}
\end{align}
For the last two term in (\ref{eq:Semisum}), using the Taylor expansion,
\[
F\left(\varphi_{h}^{n+1}\right)-F\left(\varphi_{h}^{n}\right)=\left(f\left(\varphi_{h}^{n}\right),\varphi_{h}^{n+1}-\varphi_{h}^{n}\right)+\frac{f^{\prime}\left(\xi_{h}^{n}\right)}{2}\|\varphi_{h}^{n+1}-\varphi_{h}^{n}\|^{2},
\]
\noindent then we have
\begin{align}
&-\frac{\lambda}{\varepsilon}(f(\varphi_{h}^{n}),\delta_{t}\varphi_{h}^{n+1})-\frac{\lambda }{\epsilon\tau}\|\varphi_{h}^{n+1}-\varphi_{h}^{n}\|^{2}\nonumber\\
&=-\frac{\lambda}{\epsilon\tau}\left(F\left(\varphi_{h}^{n+1}\right)-F\left(\varphi_{h}^{n}\right)\right)+\frac{\lambda}{\epsilon\tau}\left(\frac{f^{\prime}\left(\xi_{h}^{n}\right)}{2}-1\right)\|\varphi_{h}^{n+1}-\varphi_{h}^{n}\|^{2}.\label{eq:Semirhs2}
\end{align}

\noindent Plugging (\ref{eq:Semirhs1}) and (\ref{eq:Semirhs2}) into
(\ref{eq:Semisum}), and dropping the positive term $\frac{\tau}{2}\left\Vert \boldsymbol{J}_{h}^{n+1}\times\boldsymbol{B}\right\Vert _{0,\Omega}^{2}$
and $\frac{\tau}{2}\|\varphi_{h}^{n}\nabla\mu_{h}^{n+1}\|^{2}$, we
get the required estimate (\ref{eq:SemiEn}). The proof is thus complete.
\end{proof}

\section{Numerical Experiments}
In this section, we present a series of 2D numerical experiments to
illustrate the features of the proposed algorithms. The
finite element method is implemented on the finite element software
FreeFEM developed by \cite{hecht_new_2012}. For any integer $k\geq0,$
let $P_{k}(K)$ be the space of polynomials of degree $k$ on element
$K$, and denote $\boldsymbol{P}_{k}(K)=P_{k}(K)^{2}$. We employ the
Mini-element \cite{arnold_stable_1984} to approximate the velocity and pressure
\[
\boldsymbol{V}_{h}=\boldsymbol{P}_{1,h}^{b}\cap\boldsymbol{V},\quad Q_{h}=\left\{ q_{h}\in Q:\left.q_{h}\right|_{K}\in P_{1}(K),\,\forall K\in\mathcal{T}_{h}\right\}, 
\]
where $P_{1,h}^{b}=\left\{ v_{h}\in C^{0}(\Omega):\left.v_{h}\right|_{K}\in P_{1}(K)\oplus{\rm span}\{\hat{b}\},\,\forall K\subset T_{h}\right\} $,
$\hat{b}$ is a bubble function. We choose
the lowest-order Raviart-Thomas \cite{raviart_mixed_1977} element space
\[
\boldsymbol{D}_{h}=\left\{ \boldsymbol{K}_{h}\in\boldsymbol{D}:\left.\boldsymbol{K}_{h}\right|_{K}\in\boldsymbol{P}_{0}(K)+\boldsymbol{x}P_{0}(K),\,\forall K\in\mathcal{T}_{h}\right\} ,
\]
to approximate the current density, the discontinuous and piecewise constant finite element space to approximate the electric potential
\[
S_{h}=\left\{ \psi_{h}\in S:\left.\psi_{h}\right|_{K}\in P_{0}(K),\,\forall K\in\mathcal{T}_{h}\right\} .
\]
 The phase field $\varphi$ and chemical potential $\mu$ are
discretized by first order Lagrange finite element space $\left(X_{h},X_{h}\right)$, where
\[
X_{h}=\left\{ \chi_{h}\in L_{0}^{2}:\left.\chi_{h}\right|_{K}\in P_{1}(K),\,\forall K\in\mathcal{T}_{h}\right\}.
\]

\begin{example}[Convergence and accuracy]
 The first example is used to verify the convergence rates in both
time and space. The computational domain is set as $\Omega=\left(0,1\right){}^{2}$, and the external magnetic field is \textbf{$\boldsymbol{B}=\left(0,0,1\right)^{{\rm T}}$}.
The physical parameters are given by $R_{e}=\kappa=1$ with terminal
time $T=1$. The right-hand side functions, initial conditions and Dirichlet boundary conditions are chosen such that the given solutions satisfy the system. 
\end{example}

Let the approximation errors at the final time $t=T$
be denoted by
\[
e_{\omega}=\omega(T)-\omega_{h}^{N}\quad\omega\in\left\{ \boldsymbol{u},p,\boldsymbol{J},\phi,\varphi,\mu\right\} .
\]

First, we test the temporal convergence orders. The analytic solutions
are chosen as
\begin{align*}
\boldsymbol{u} & =\left(y\exp\left(-t\right),x\cos\left(t\right)\right),\quad p=\sin\left(t\right),\\
\boldsymbol{J} & =\left(\sin\left(t\right),\cos\left(t\right)\right),\quad\phi=1,\\
\varphi & =(x+y)\exp\left(-t\right),\quad\mu=x\cos t.
\end{align*}
Note that the exact solutions are linear or constant in space, the main error comes from the discretization of the time variable. We
fix a mesh size with $h=1/10$ and test the convergence rate with respect
to the time step. Then the errors and orders are displayed in Tables \ref{tab:Tup}-\ref{tab:Tcmu}.
From these tables, we observe that the errors of all variable decrease as the mesh is refined, with convergence order of $O(\tau)$, which accords with our theoretical analysis completely. 

\begin{table}
\caption{Time convergence rates of the scheme for $\left(\boldsymbol{u},p\right)$
\label{tab:Tup}}
\begin{centering}
\begin{tabular}{cccc}
\toprule 
$\tau$ & $\|e_{\boldsymbol{u}}\|_{0}$ & $\|\nabla e_{\boldsymbol{u}}\|_{0}$ & $\|e_{p}\|_{0}$\tabularnewline
\midrule 
0.2 & 1.7227e-4(\textemdash ) & 1.3384e-3(\textemdash ) & 3.4903e-2(\textemdash )\tabularnewline
\midrule 
0.1 & 7.5918e-05(1.18) & 5.9037e-4(1.18) & 1.6788e-2(1.06)\tabularnewline
\midrule 
0.05 & 3.5535e-05(1.10) & 2.7649e-4(1.09) & 8.1861e-3(1.04)\tabularnewline
\midrule 
0.025 & 1.7206e-05(1.05) & 1.3391e-4(1.05) & 4.0362e-3(1.02)\tabularnewline
\midrule 
0.0125 & 8.4693e-06(1.02) & 6.5926e-05(1.02) & 2.0033e-3(1.01)\tabularnewline
\midrule 
0.00625 & 4.2022e-06(1.01) & 3.2712e-05(1.01) & 9.9790e-4(1.01)\tabularnewline
\midrule 
\end{tabular}
\par\end{centering}
\caption{Time convergence rates of the scheme for $\left(\boldsymbol{J},\phi\right)$\label{tab:Tjphi}}

\begin{centering}
\begin{tabular}{cccc}
\toprule 
$\tau$ & $\|e_{\boldsymbol{J}}\|_{\text{div }}$ & $\|e_{\phi}\|_{0}$ & $\|\nabla\cdot\boldsymbol{J}_{h}^{N}\|_{0}$\tabularnewline
\midrule 
0.2 & 6.9951e-3(\textemdash ) & 3.6822e-2(\textemdash ) & 3.68218e-12\tabularnewline
\midrule 
0.1 & 3.6099e-4(0.95) & 1.9026e-2(0.95) & 1.90247e-12\tabularnewline
\midrule 
0.05 & 1.8227e-3(0.99) & 9.6627e-3(0.98) & 9.66331e-13\tabularnewline
\midrule 
0.025 & 9.1439e-4(1.00) & 4.8671e-3(0.99) & 4.86857e-13\tabularnewline
\midrule 
0.0125 & 4.5781e-4(1.00) & 2.4422e-3(0.99) & 2.44362e-13\tabularnewline
\midrule 
0.00625 & 2.2904e-4(1.00) & 1.2233e-3(1.00) & 1.22111e-13\tabularnewline
\bottomrule
\end{tabular}
\par\end{centering}
\caption{Time convergence rates of the scheme for $\left(\varphi,\mu\right)$\label{tab:Tcmu}}

\centering{}%
\begin{tabular}{ccccc}
\midrule 
$\tau$ & $\|e_{\varphi}\|_{0}$ & $\|\nabla e_{\varphi}\|_{0}$ & $\|e_{\mu}\|_{0}$ & $\|\nabla e_{\mu}\|_{0}$\tabularnewline
\midrule 
0.2 & 1.1298e-1(\textemdash ) & 1.8314e-2(\textemdash ) & 1.1941e-1 (\textemdash ) & 6.1937e-3(\textemdash )\tabularnewline
\midrule 
0.1 & 5.1171e-2(1.14) & 5.9434e-3(1.62) & 7.0687e-2(0.77) & 3.2132e-3(0.95)\tabularnewline
\midrule 
0.05 & 2.4156e-2(1.08) & 2.2442e-3(1.40) & 3.6871e-2(0.94) & 1.5619e-3(1.04)\tabularnewline
\midrule 
0.025 & 1.1710e-2(1.04) & 9.6111e-4(1.22) & 1.8674e-2(0.98) & 7.6263e-4(1.03)\tabularnewline
\midrule 
0.0125 & 5.7621e-3(1.02) & 4.4380e-4(1.11) & 9.3800e-3(0.99) & 3.7607e-4(1.02)\tabularnewline
\midrule 
0.00625 & 2.8576e-3(1.01) & 2.1321e-4(1.06) & 4.6987e-3(1.00) & 1.8666e-5(1.01)\tabularnewline
\bottomrule
\end{tabular}
\end{table}

Next, we aim to check the spatial approximation orders. To this end, we choose the exact solution 
\begin{align*}
\boldsymbol{u} & =\left(\sin\left(y\right)\exp\left(-t\right),x^{2}\cos\left(t\right)\right),\quad p=y\sin\left(t\right),\\
\boldsymbol{J} & =\left(y^{2}\sin\left(t\right),\sin\left(x\right)\cos\left(t\right)\right),\quad\phi=x\exp\left(-t\right),\\
\varphi & =\sin(x)\exp(-t),\quad\mu=\cos(y)\cos(t).
\end{align*}
With initial time step and mesh width $h_{0}=2\tau_{0}=1/2$, we simultaneously refine time and space size such that the relation
$h=2\tau$ holds. The corresponding convergent results
are demonstrated in Tables \ref{tab:Supjphi}-\ref{tab:phicmu} and a first order convergence of the proposed numerical scheme can be observed
asymptotically, which agrees with our expected results.

\begin{table}
\begin{centering}
\caption{Full discretization convergence rates of the scheme for $\left(\boldsymbol{u},p,\boldsymbol{J}\right)$\label{tab:Supjphi}}
\par\end{centering}
\begin{centering}
\begin{tabular}{ccccc}
\hline 
$\ensuremath{\left(\tau,h\right)}$ & $\|\nabla e_{\boldsymbol{u}}\|_{0}$ & $\|e_{p}\|_{0}$ & $\|e_{\boldsymbol{J}}\|_{\text{div }}$ & $\|\nabla\cdot\boldsymbol{J}_{h}^{N}\|_{0}$\tabularnewline
\hline 
$\ensuremath{\left(\tau_{0},h_{0}\right)}$ & 1.1685e-1(\textemdash ) & 5.4282e-2(\textemdash ) & 1.5322e-1(\textemdash ) & 1.02906e-11\tabularnewline
\hline 
$\ensuremath{\left(\tau_{0},h_{0}\right)}/2$ & 5.7591e-2(1.02) & 1.8775e-2(1.53) & 7.7659e-2(0.98) & 1.03095e-11\tabularnewline
\hline 
$\ensuremath{\left(\tau_{0},h_{0}\right)}/4$ & 2.8635e-2(1.01) & 7.4194e-3(1.34) & 3.9010e-2(0.99) & 1.04248e-11\tabularnewline
\hline 
$\ensuremath{\left(\tau_{0},h_{0}\right)}/8$ & 1.4287e-2(1.00) & 3.3507e-3(1.15) & 1.9536e-2(1.00) & 1.05131e-11\tabularnewline
\hline 
$\ensuremath{\left(\tau_{0},h_{0}\right)}/16$ & 7.1370e-3(1.00) & 1.6213e-3(1.05) & 9.7733e-3(1.00) & 1.05643e-11\tabularnewline
\hline 
$\ensuremath{\left(\tau_{0},h_{0}\right)}/32$ & 3.5671e-3(1.00) & 8.0384e-4(1.01) & 4.8877e-3(1.00) & 1.05915e-11\tabularnewline
\hline 
\end{tabular}
\par\end{centering}
\centering{}\caption{Full discretization convergence rates of the scheme for $\left(\phi,\varphi,\text{\ensuremath{\mu}}\right)$\label{tab:phicmu}}
\begin{tabular}{cccc}
\hline 
$\ensuremath{\left(\tau,h\right)}$ & $\|e_{\phi}\|_{0}$ & $\|\nabla e_{\varphi}\|_{0}$ & $\|\nabla e_{\mu}\|_{0}$\tabularnewline
\hline 
$\ensuremath{\left(\tau_{0},h_{0}\right)}$ & 4.5526e-2(\textemdash ) & 2.8424e-2(\textemdash ) & 6.3754e-2(\textemdash )\tabularnewline
\hline 
$\ensuremath{\left(\tau_{0},h_{0}\right)}/2$ & 2.2281e-2(1.00) & 1.4898e-2(0.93) & 3.2766e-2(0.96)\tabularnewline
\hline 
$\ensuremath{\left(\tau_{0},h_{0}\right)}/4$ & 1.1450e-2(0.99) & 7.5740e-3(0.98) & 1.6567e-2(0.98)\tabularnewline
\hline 
$\ensuremath{\left(\tau_{0},h_{0}\right)}/8$ & 5.7815e-3(0.99) & 3.8063e-3(0.99) & 8.3157e-3(0.99)\tabularnewline
\hline 
$\ensuremath{\left(\tau_{0},h_{0}\right)}/16$ & 2.900e-3(1.00) & 1.9060e-3(1.00) & 4.1632e-3(1.00)\tabularnewline
\hline 
$\ensuremath{\left(\tau_{0},h_{0}\right)}/32$ & 1.4520e-3(1.00) & 9.5336e-4(1.00) & 2.0825e-3(1.00)\tabularnewline
\hline 
\end{tabular}
\end{table}

Finally, we verify the exactly divergence-free property of the discrete current
density. From the last column of Tables \ref{tab:Tjphi} and \ref{tab:Supjphi},
the approximate solutions yields $\left\Vert \nabla\cdot\boldsymbol{J}_{h}\right\Vert _{0}$
in the order of $10^{-11}\sim10^{-13}$, which is almost divergence-free.
These tiny errors mainly result from the numerical integral errors
and rounding errors. 
\begin{example}[Shape relaxation]
In this example, we simulate the evolution of a square shaped bubble
and two kissing circular bubbles in the domain $\Omega=(0,1)^{2}$.
We set the external magnetic field as \textbf{$\boldsymbol{B}=\left(0,0,1\right)^{{\rm T}}$}
and the physical parameters
\[
\eta=\sigma=1,\quad\epsilon=0.01,\quad\gamma=M=0.1.
\]
\noindent The initial velocity is taken as zero vector. For a square shaped bubble, the initial profile of phase function $\varphi$ is chosen
to be
\[
\varphi_{0}=\tanh\left(\frac{|x+y-1|+|x-y|-0.4}{\sqrt{2}\varepsilon}\right).
\]

\noindent For two kissing circular bubbles, $\varphi_{0}$ is taken
as
\[
\varphi_{0}=1-\tanh\left(\frac{\left\Vert x-x_{o_{1}}\right\Vert -r_{1}}{\sqrt{2}\varepsilon}\right)-\tanh\left(\frac{\left\Vert x-x_{o_{2}}\right\Vert -r_{2}}{\sqrt{2}\varepsilon}\right)
\]

\noindent where$\left\Vert x-x_{o_{i}}\right\Vert $ is the Eulerian
distance between the points $x$ and $x_{o_{i}}$. The two points $x_{o_{1}}=\left(0.3,0.5\right)$
and $x_{o_{2}}=\left(0.7,0.5\right)$ , are the center of two bubbles,
$r_{1}=r_{2}=0.2$ are their radius.
\end{example}

With the prescribed data, we conduct the numerical experiments with the mesh size $h=1/64$ and time step $\tau=0.01$, and calculate the total energy ${\rm E}$ and
the mass $\int_{\Omega}\phi_{h}^{n}{\rm d}x$ at each time step. 

For a square shaped bubble, we choose the terminal time $T=1$.
Fig. \ref{fig:CubeEn} shows the evolution curve of the energy ${\rm E}$
and the mass $\int_{\Omega}\phi_{h}^{n}{\rm d}x$. We notice that
energy decay monotonically and the mass $\int_{\Omega}\phi_{h}^{n}{\rm d}x$
remains a constant. This confirms that our decoupled scheme
is energy stable and mass conservative. Snapshots of the phase evolution
at different time are presented in Fig. \ref{fig:CubeSnap}. We observe
that the isolated square shape relaxes to a circular shape, due to
the effect of surface tension. 

\noindent 
\begin{figure}
\begin{centering}
\subfloat[The energy]{\begin{centering}
\includegraphics[scale=0.5]{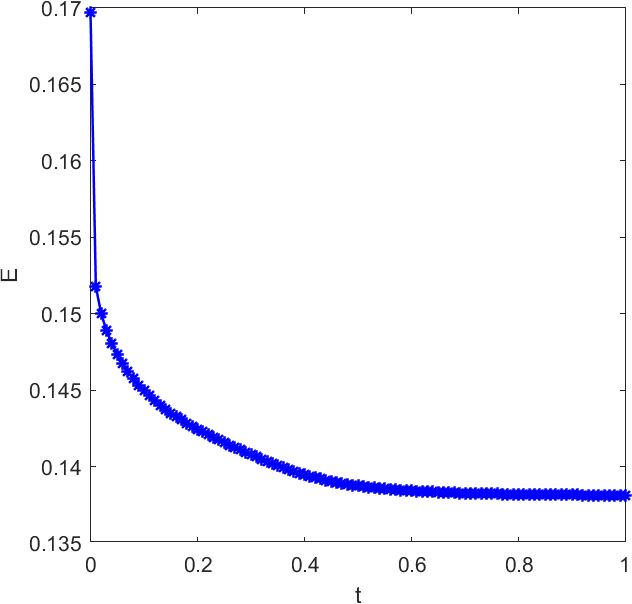}
\par\end{centering}
}\subfloat[The mass]{\begin{centering}
\includegraphics[scale=0.5]{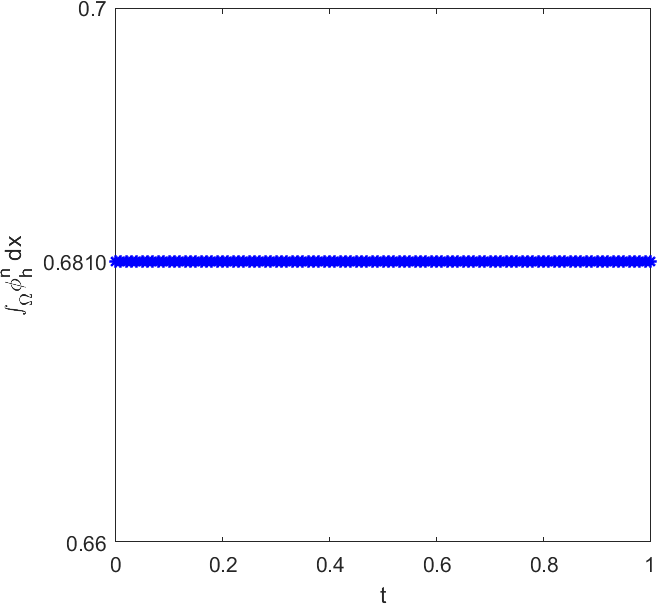}
\par\end{centering}
}
\par\end{centering}
\caption{Time evolution of the energy and mass for a square shaped bubble.\label{fig:CubeEn}}
\end{figure}

\noindent 
\begin{figure}
\begin{centering}
\subfloat[$t=0$]{\includegraphics[scale=0.22]{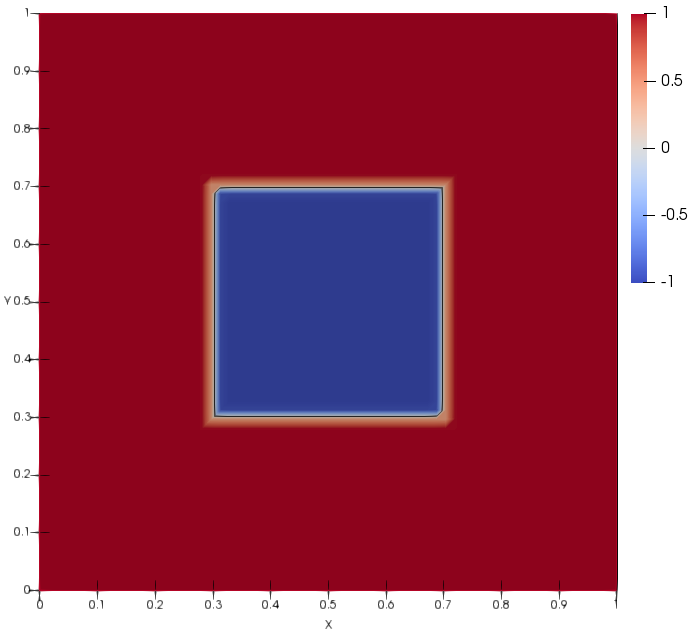}

}\subfloat[$t=0.01$]{\includegraphics[scale=0.22]{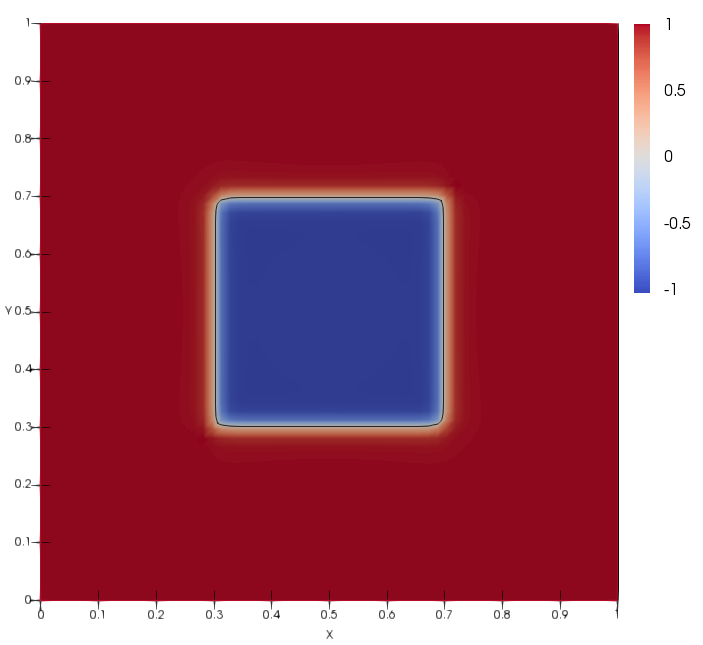}

}\subfloat[$t=0.21$]{\includegraphics[scale=0.22]{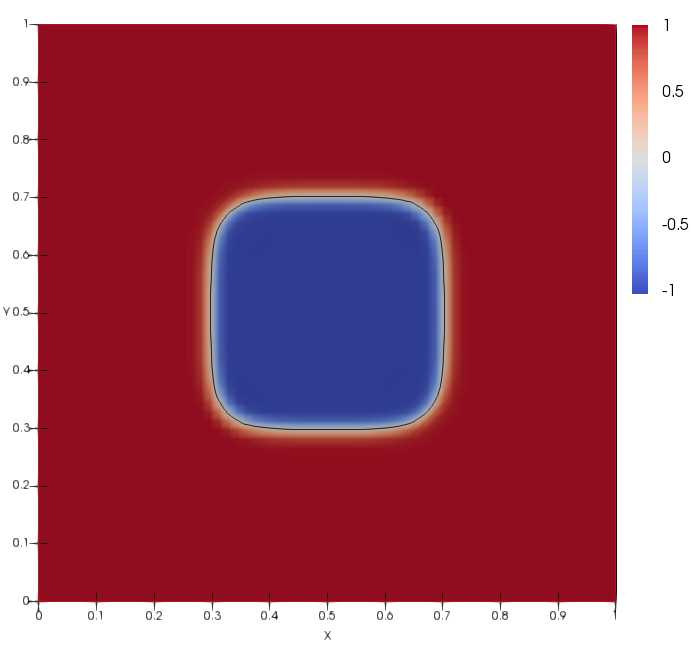}

}
\par\end{centering}
\begin{centering}
\subfloat[$t=0$.36]{\includegraphics[scale=0.22]{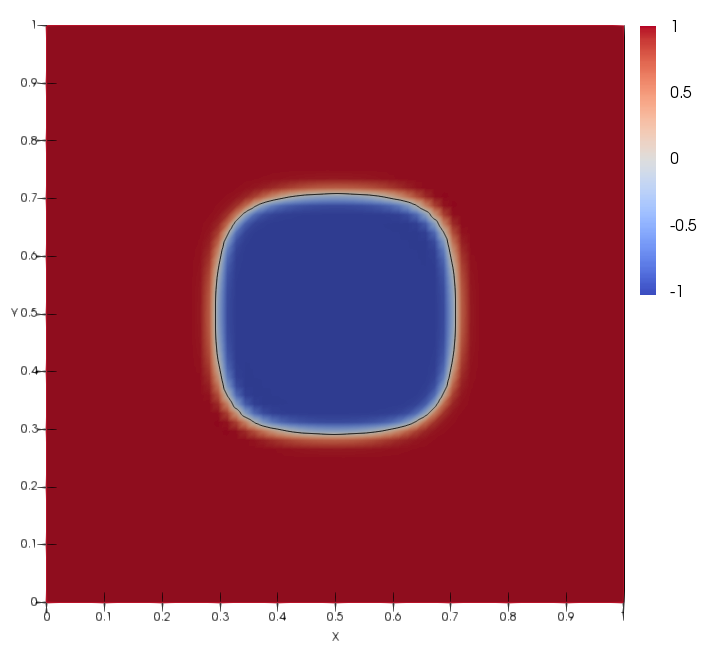}

}\subfloat[$t=0.51$]{\includegraphics[scale=0.22]{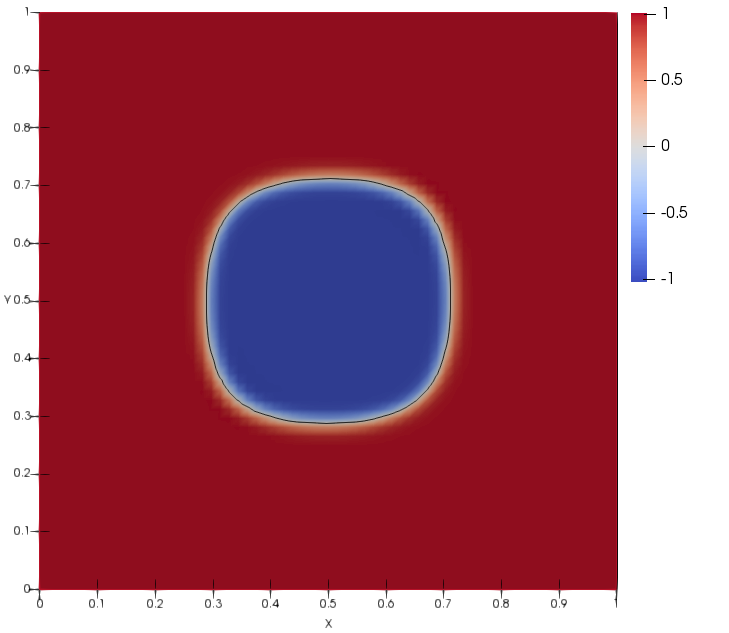}

}\subfloat[$t=1.00$]{\includegraphics[scale=0.22]{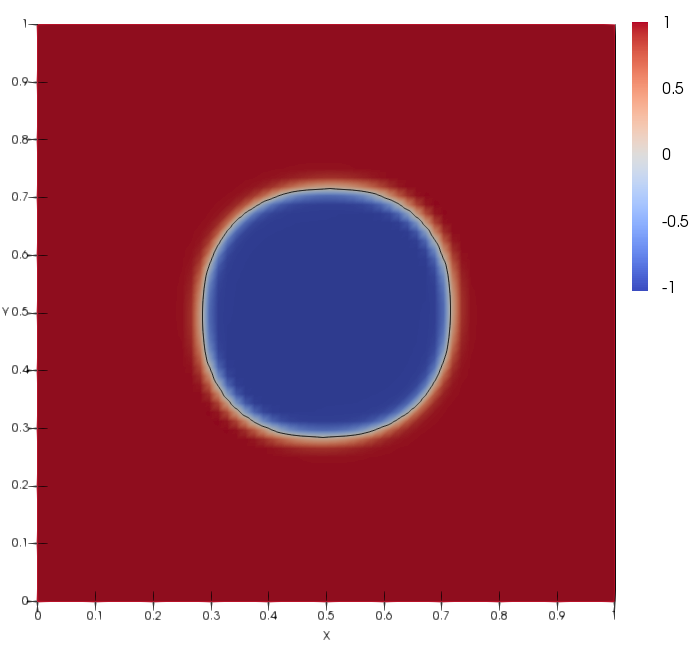}

}
\par\end{centering}
\caption{Snapshots of the relaxation of a square shape. The black circle indicates
the zero-level set of $\varphi_{h}^{n}$.\label{fig:CubeSnap}}
\end{figure}

For two kissing circular bubbles, we choose the terminal time $T=15$.
The time evolution of the energy ${\rm E}$ and the mass $\int_{\Omega}\phi_{h}^{n}{\rm d}x$
are displayed in Fig. \ref{fig:BallEn}. We still observe that energy
curves decay monotonically and the mass $\int_{\Omega}\phi_{h}^{n}{\rm d}x$
remains constant in time. Again, this confirms that the decoupled scheme
is energy stable and mass-conservative. Fig. \ref{fig:BallSnap}
displays some snapshots of the phase evolution. From this figure,
one can see that as time evolves, the two bubbles quickly connect together 
and eventually coalesces into one big bubble under the influence of
surface tension.

\noindent 
\begin{figure}
\begin{centering}
\subfloat[The energy]{\begin{centering}
\includegraphics[scale=0.5]{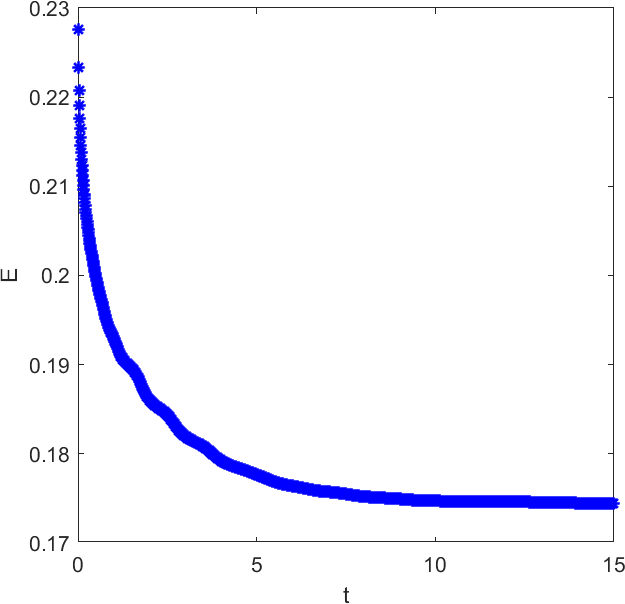}
\par\end{centering}
}\subfloat[The mass]{\begin{centering}
\includegraphics[scale=0.5]{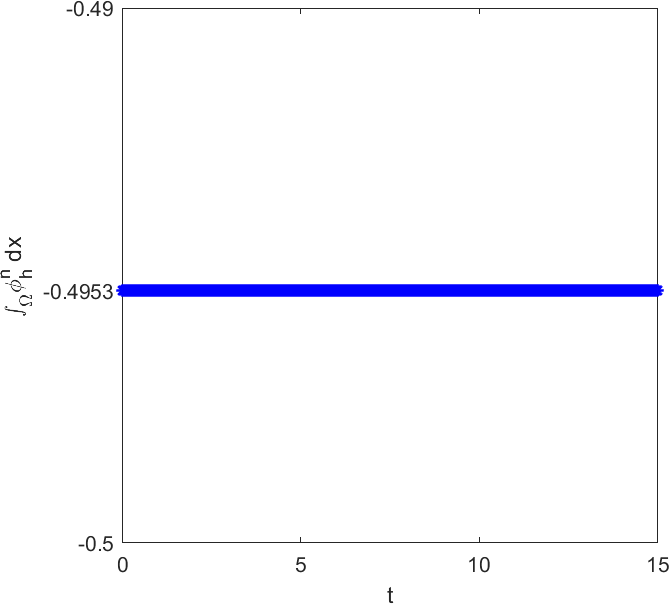}
\par\end{centering}
}
\par\end{centering}
\caption{Time evolution of the energy and mass for two kissing circular bubbles.\label{fig:BallEn}}
\end{figure}

\noindent 
\begin{figure}
\begin{centering}
\subfloat[$t=0$]{\includegraphics[scale=0.22]{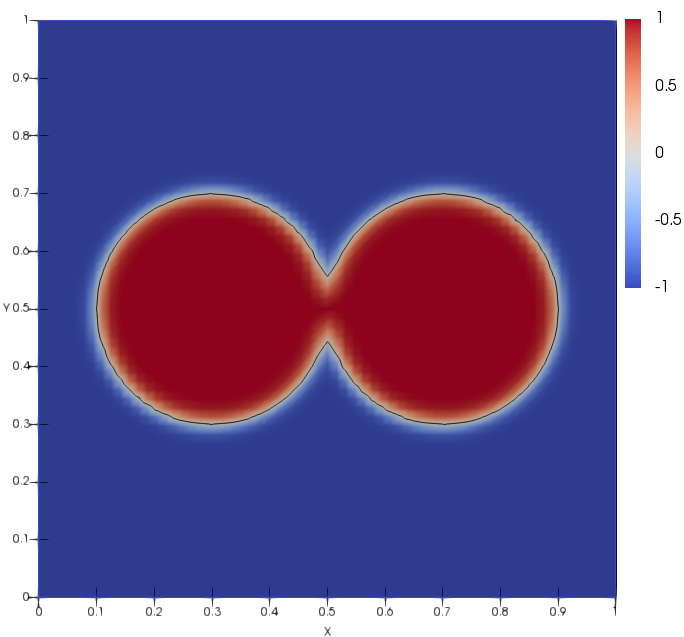}

}\subfloat[$t=0.01$]{\includegraphics[scale=0.22]{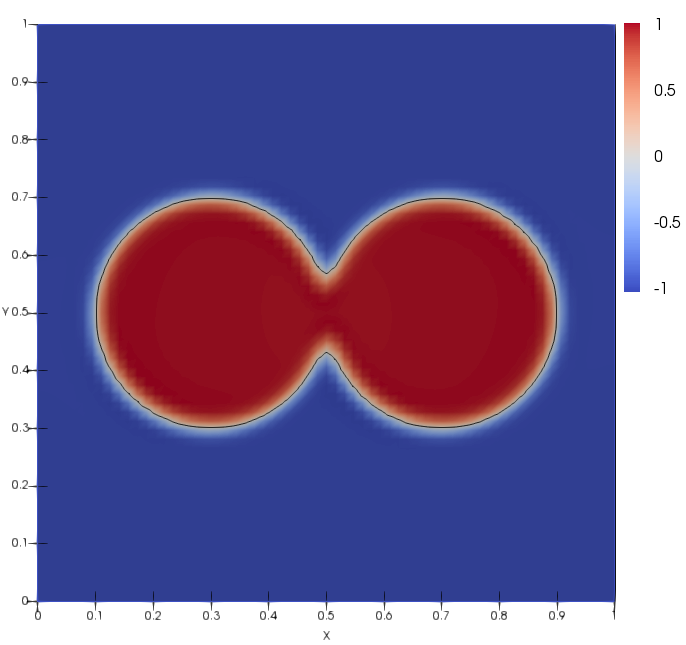}

}\subfloat[$t=0.51$]{\includegraphics[scale=0.22]{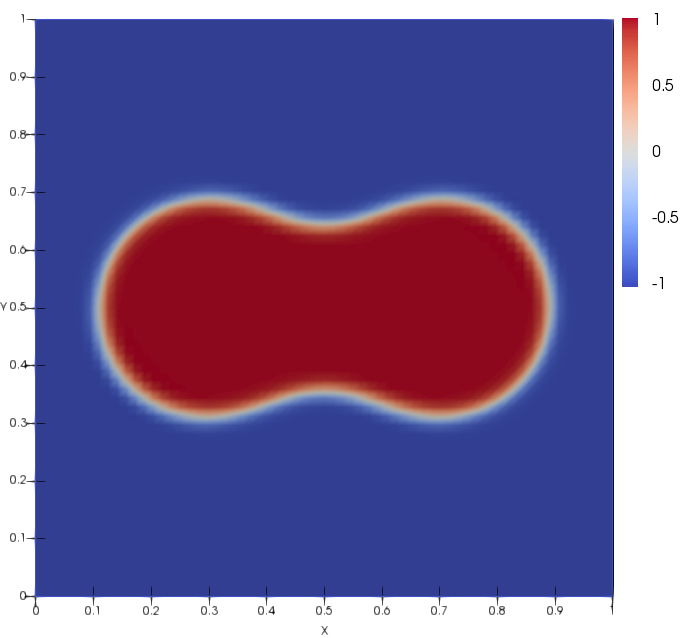}

}
\par\end{centering}
\begin{centering}
\subfloat[$t=2.51$]{\includegraphics[scale=0.22]{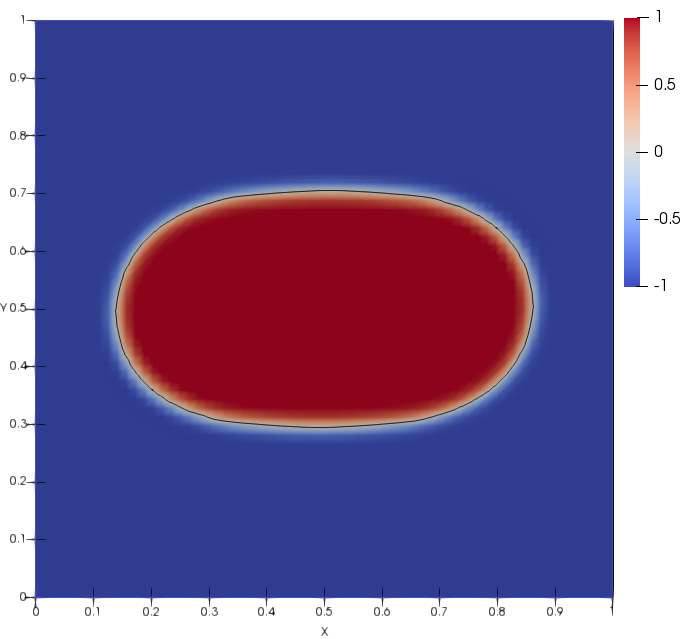}

}\subfloat[$t=5.01$]{\includegraphics[scale=0.22]{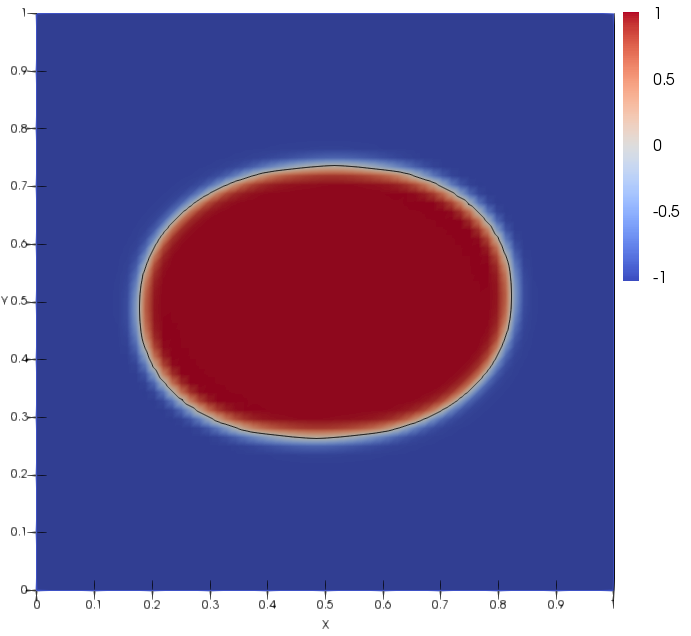}

}\subfloat[$t=7.51$]{\includegraphics[scale=0.22]{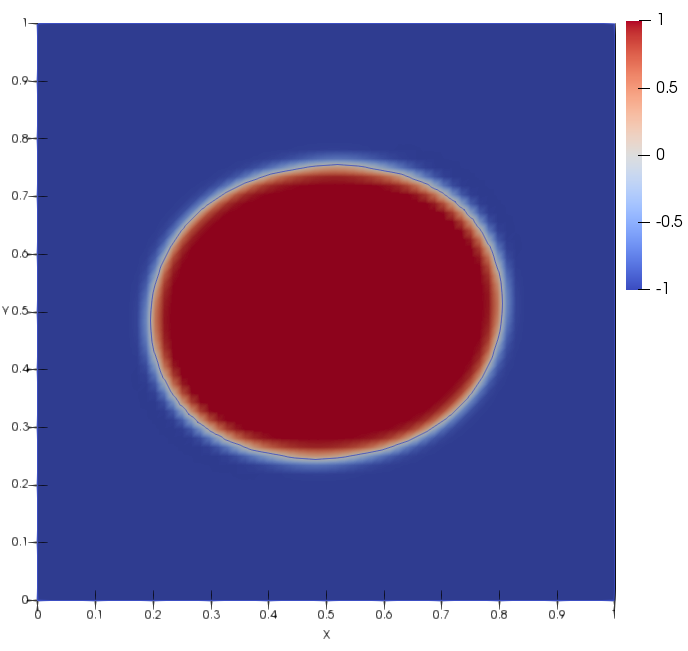}

}
\par\end{centering}
\begin{centering}
\subfloat[$t=10.01$]{\includegraphics[scale=0.22]{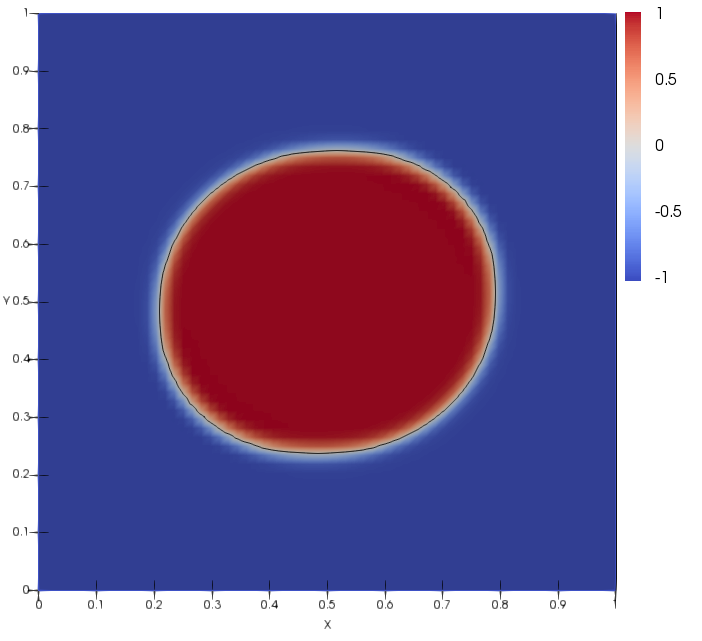}

}\subfloat[12.51]{\includegraphics[scale=0.22]{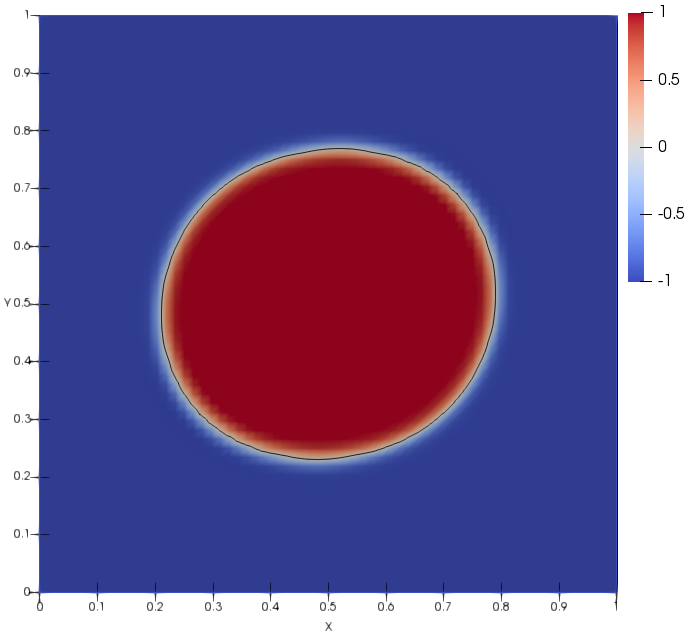}

}\subfloat[$t=15.00$]{\includegraphics[scale=0.22]{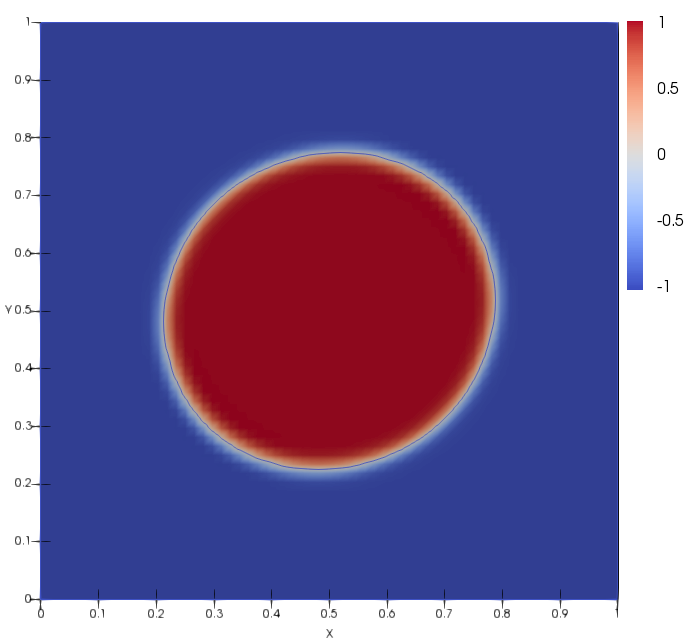}

}
\par\end{centering}
\caption{Snapshots of the relaxation of two kissing circular bubbles. The black circle indicates
the zero-level set of $\varphi_{h}^{n}$.\label{fig:BallSnap}}
\end{figure}

\begin{example}[Kelvin\textendash Helmholtz instability]
	In this example, we simulate the Kelvin\textendash Helmholtz
	instability \cite{drazin_hydrodynamic_2004}, which is one of the most fundamental instabilities in
	incompressible fluids. Considering the model problem on a rectangular
	domain $\Omega=(0,0.5)\times(0,1)$, we choose $\boldsymbol{B}=\left(0,0,1\right)^{{\rm T}}$, $\eta=0.0002,\,\sigma=1,\,\epsilon=M=0.01,$
	and $\gamma=0.001.$ The initial conditions for the phase function
	$\varphi$ and velocity $\boldsymbol{u}$ are given by 
	\[
	\ensuremath{\phi_{0}=\tanh\left(\frac{6\left(y-y_{c}\right)}{\sqrt{2}\epsilon}\right)},\quad\boldsymbol{u}_{0}=\ensuremath{\left(\tanh\left(50\left(y-y_{c}\right)\right),0\right)}^{^{{\rm T}}},
	\]
	
	\noindent where $y_{c}=0.5+0.005\sin\left(4\pi x\right)$. The boundary
	condition for velocity is 
	\[
	\boldsymbol{u}=\left(\pm1,0\right)^{{\rm T}}\quad\text{ at }y=\pm1,\quad\boldsymbol{u}=\left(0,0\right)^{{\rm T}}\quad\text{ at }x=\pm1.
	\]
\end{example}	

A uniform mesh with the step size $h=1/128$ and a uniform
time partition with the time step size $\tau=0.01$ are used in this
simulation. Fig. \ref{fig:KHSnap} shows several snapshots of the
phase field to illustrate the evolution of the shearing interface.
We observe that the flow sweeps the initial interfacial vorticity
into the center in early stage. As vorticity accumulates at the center,
the interface begins to steepen and the height of the instability
gets larger. At late stage, roll-up follows and the interface evolves
into a spiral. The dynamics of the interface is similar to those obtained
in \cite{ceniceros_threedimensional_2010,lee_khi_2015}.

\begin{figure}
	\begin{centering}
		\subfloat[$t=0$]{\includegraphics[scale=0.25]{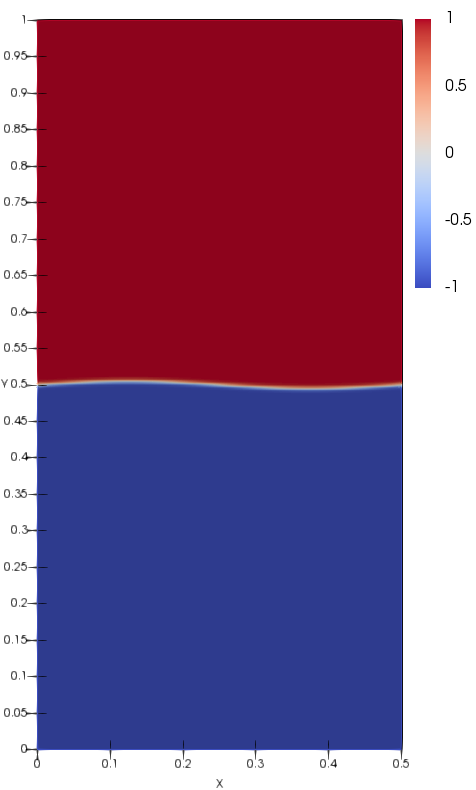}
			
		}\subfloat[$t=0.21$]{\includegraphics[scale=0.25]{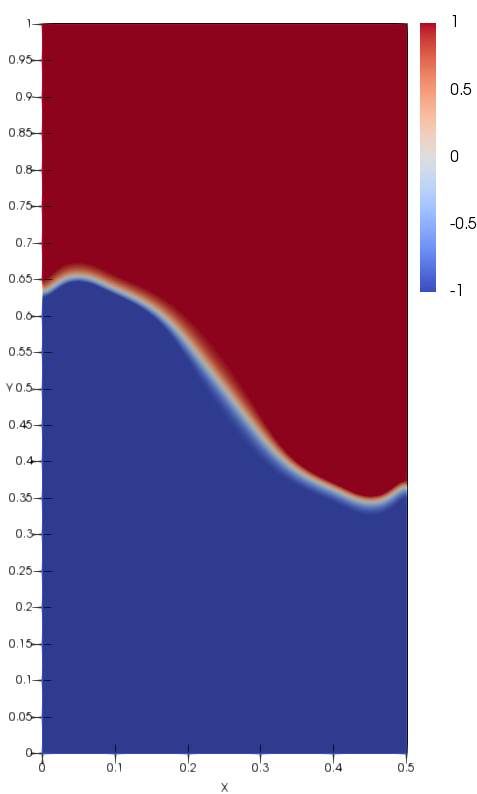}
			
		}\subfloat[$t=0.41$]{\includegraphics[scale=0.25]{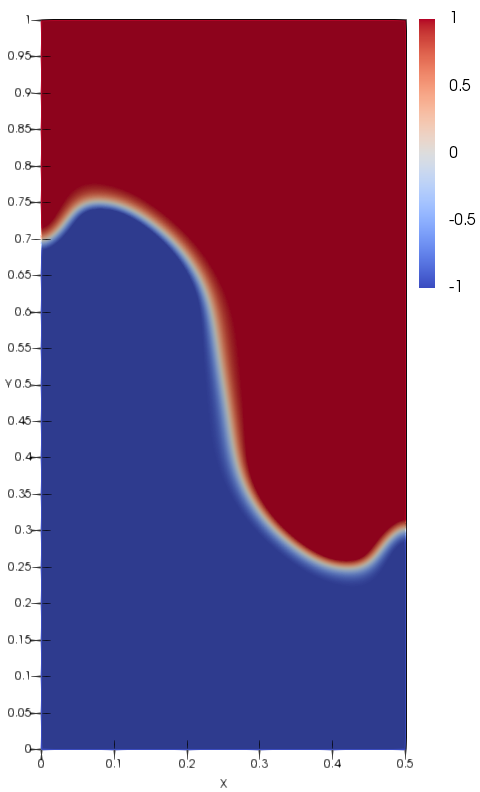}
			
		}
		\par\end{centering}
	\begin{centering}
		\subfloat[$t=0.61$]{\includegraphics[scale=0.25]{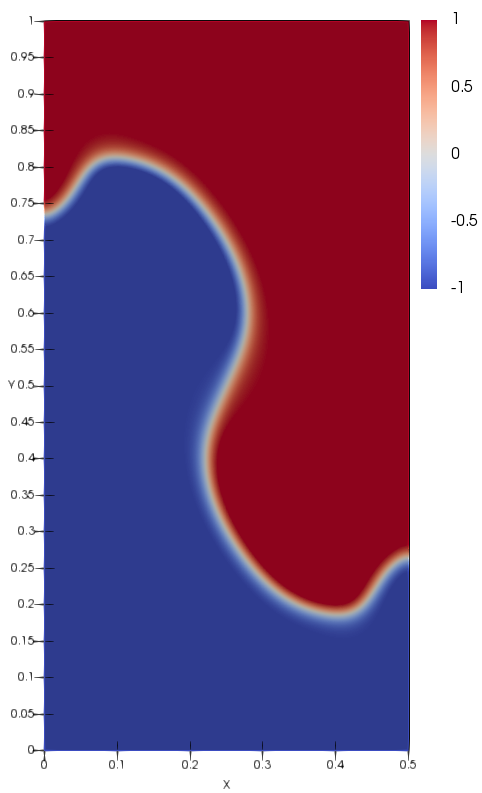}
			
		}\subfloat[$t=0.81$]{\includegraphics[scale=0.25]{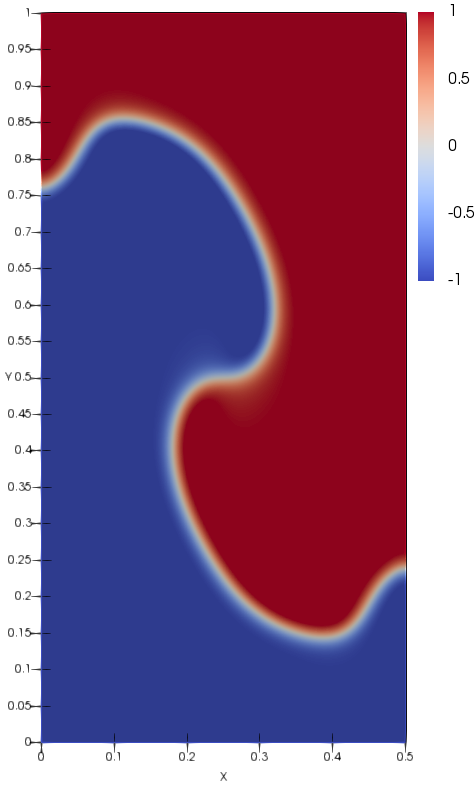}
			
		}\subfloat[$t=1.01$]{\includegraphics[scale=0.25]{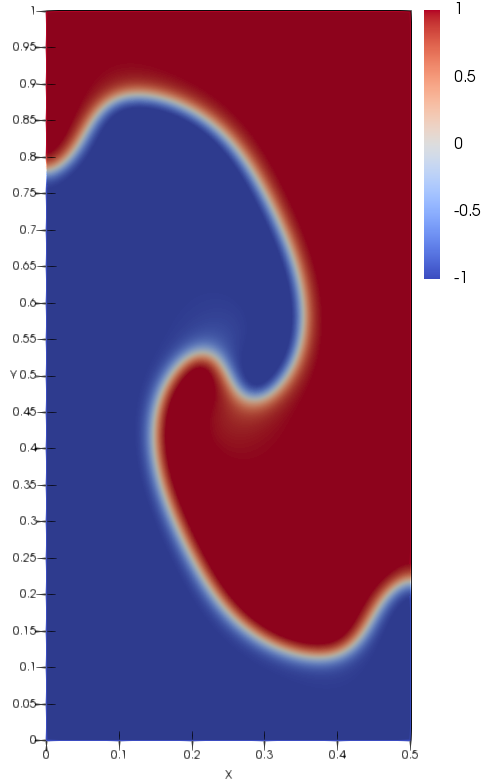}
			
		}
		\par\end{centering}
	\begin{centering}
		\subfloat[$t=1.21$]{\includegraphics[scale=0.25]{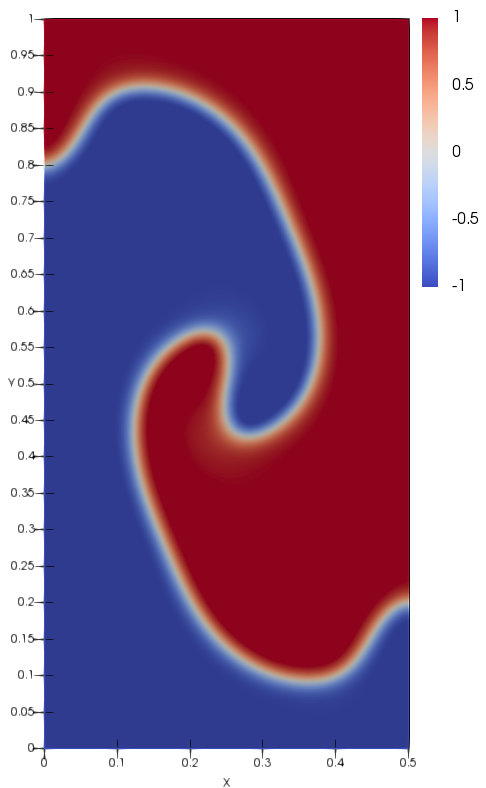}
			
		}\subfloat[$t=1.61$]{\includegraphics[scale=0.25]{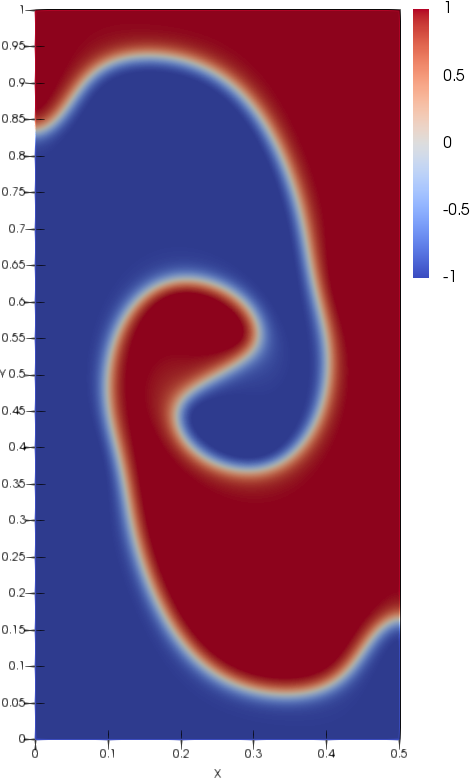}
			
		}\subfloat[$t=2.00$]{\includegraphics[scale=0.25]{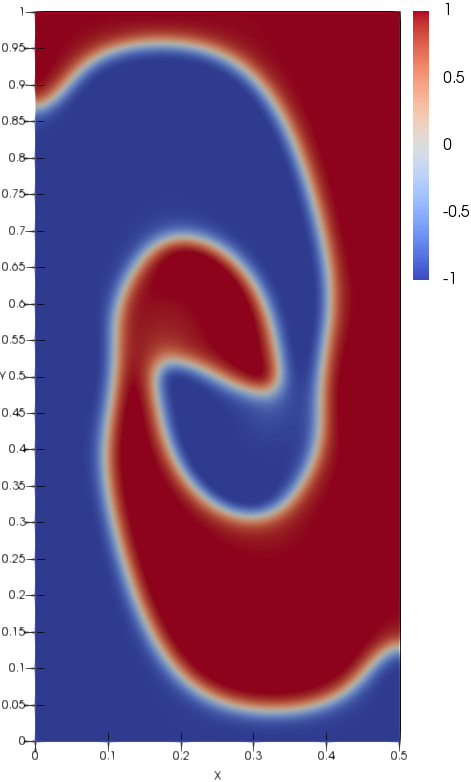}
			
		}
		\par\end{centering}
	\caption{Snapshots of the relaxation of the shearing interface. The black circle
		indicates the zero-level set of $\varphi_{h}^{n}$.\label{fig:KHSnap}}
\end{figure}

\begin{example}[Gravity-driven flow]
	This example is to study the effect of the gravity on the two-phase
	fluid. Similar to \cite{Chen2020}, we supplement the gravity
	force as a force term $\boldsymbol{f}_{g}$ on the right-hand side
	of the momentum equation (\ref{model:fluid}),
	\[
	\boldsymbol{f}_{g}=\boldsymbol{g}\frac{H\left(\varphi\right)+1}{2},
	\]
	where $H\left(\varphi\right)=\frac{1}{1+\exp\left(-\frac{\varphi}{\epsilon}\right)}$
	is a regularized approximation of the Heaviside step function, and
	$\boldsymbol{g}=(0,10)$ is the gravity. The physical parameters are
	given as $\eta=\epsilon=0.01,\sigma = 100, \,M=0.001,\ T=2.5.$ The initial data
	of $\boldsymbol{u}$ is $\boldsymbol{0}$ and of $\varphi$ is set
	as 
	\[
	\varphi_{0}=-\tanh\left(\frac{\left\Vert x-x_{o}\right\Vert -r}{\sqrt{2}\varepsilon}\right),
	\]
	where $x_{o}=\left(0.5,0.8\right)$, and $r=0.1$.
\end{example}

We simulate the CHIMHD system using the proposed scheme with $h=0.01$
and $\tau=0.005$. In Fig. \ref{fig:SnapsG}, we plot some snapshots
of the phase-field profile $\varphi$ with $\gamma=0.01,0.005,0.001$.
We observe that the effect of the gravity the circle bubble changes
shape as it falls, and it transforms to be flat when it approaches to 
the boundary. The smaller $\gamma$ is, the greater deformation.
Since the Ginzburg\textendash Landau energy models adhesion forces,
it can be expected that a reduction of the parameter $\gamma$ reduces
adhesion forces and leads to instabilities. Thus, the results verify
this point. 
\begin{figure}
		\begin{centering}
			\begin{tabular}{cccc}
				\includegraphics[scale=0.15]{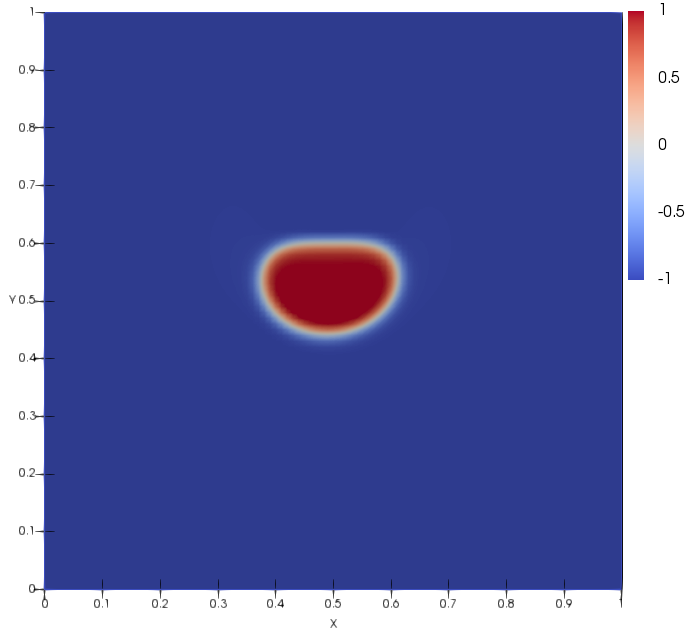} & \includegraphics[scale=0.15]{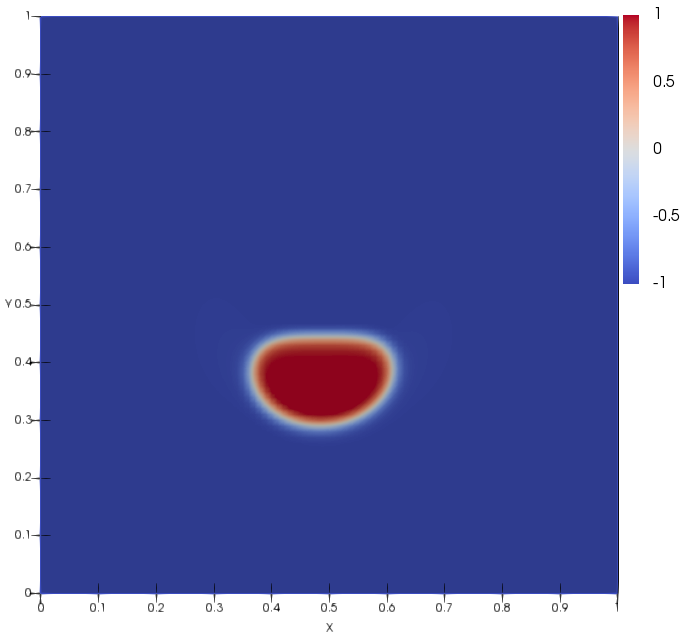} & \includegraphics[scale=0.15]{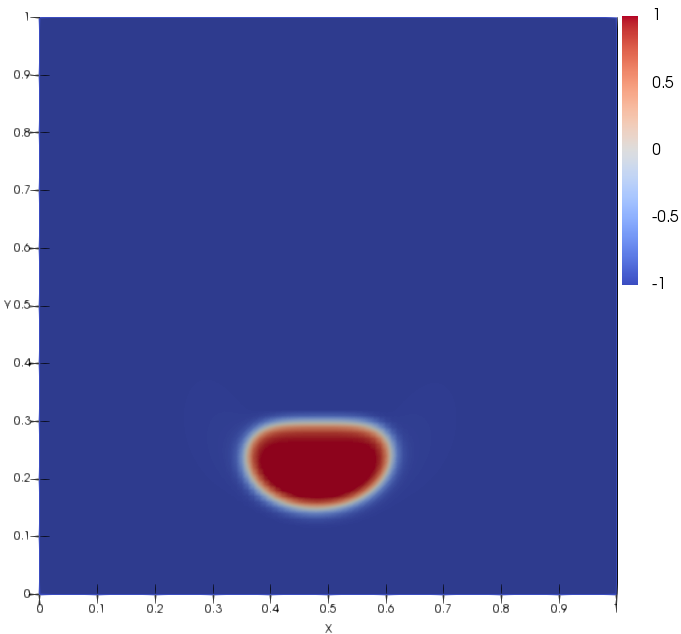} & \includegraphics[scale=0.15]{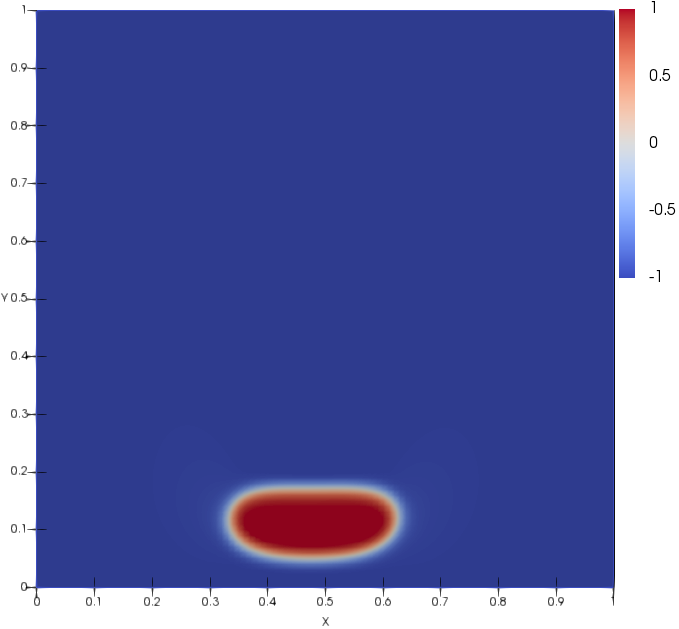}\tabularnewline
				\includegraphics[scale=0.15]{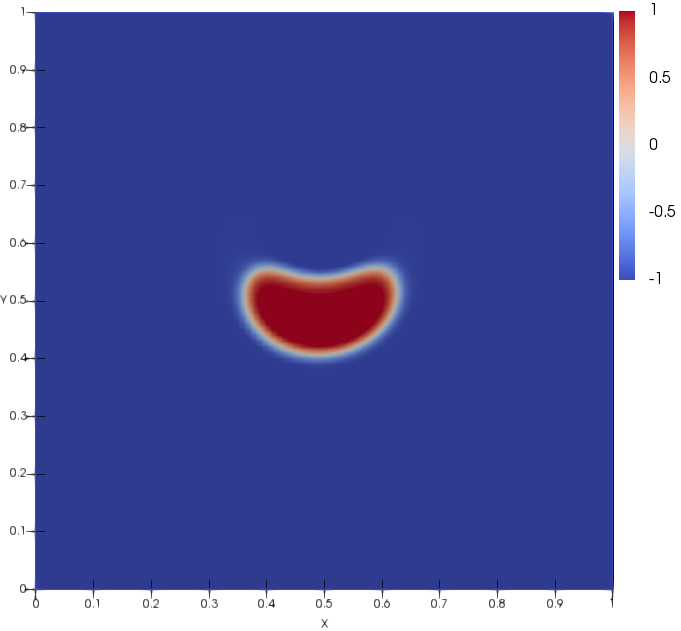} & \includegraphics[scale=0.15]{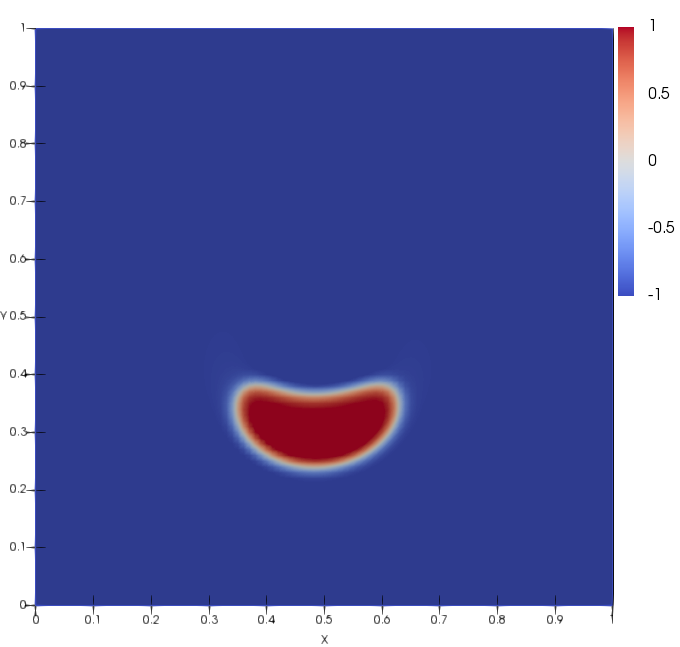} & \includegraphics[scale=0.15]{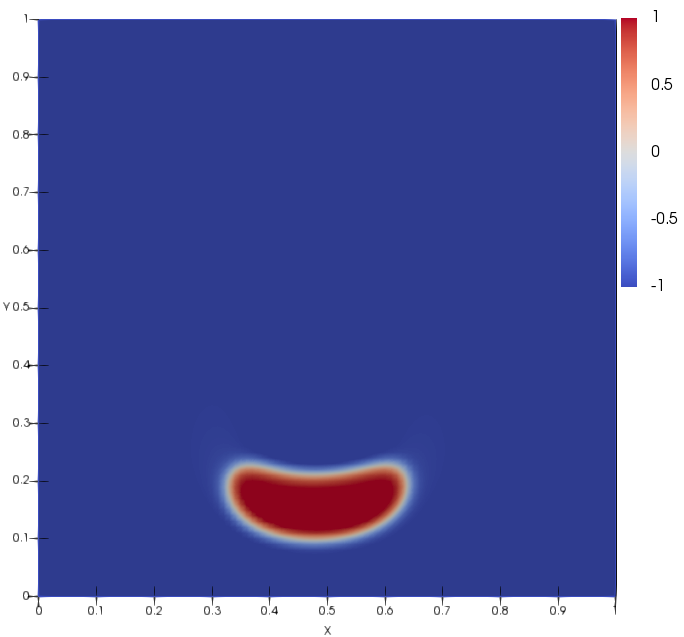} & \includegraphics[scale=0.15]{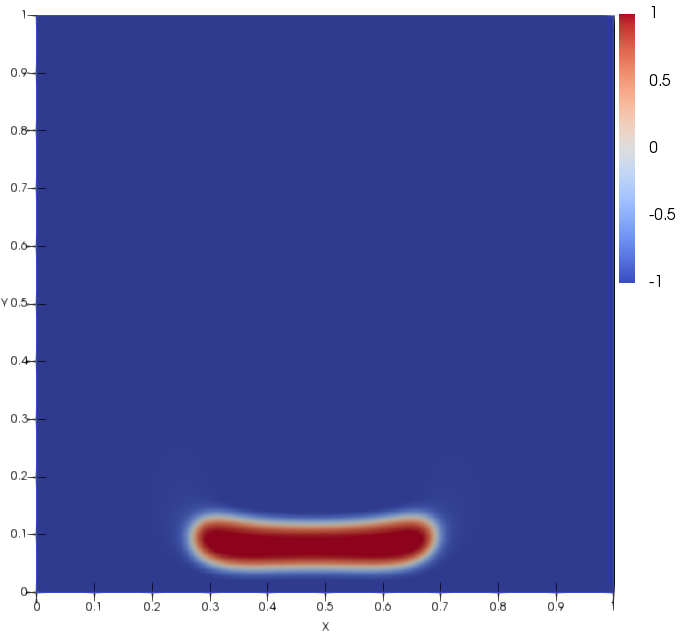}\tabularnewline
				\includegraphics[scale=0.15]{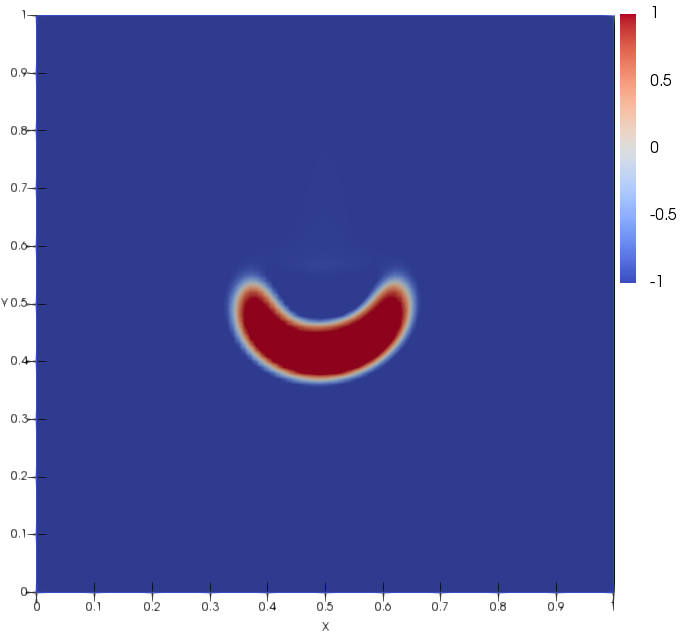} & \includegraphics[scale=0.15]{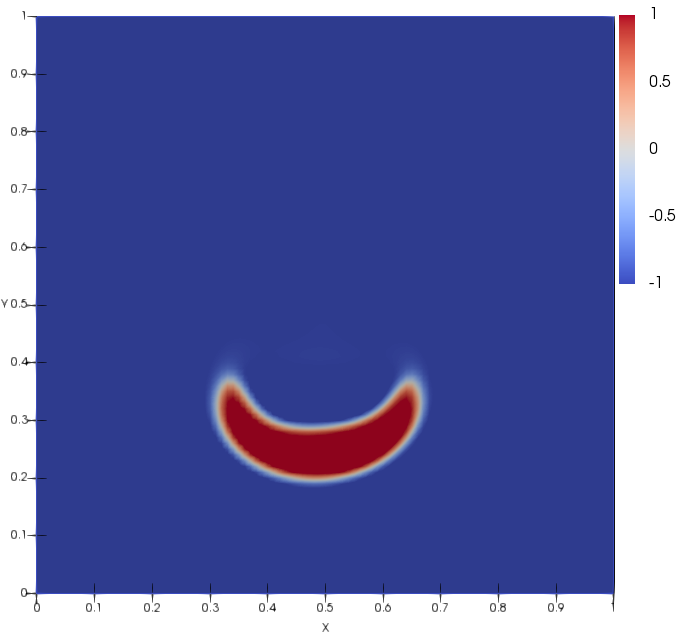} & \includegraphics[scale=0.15]{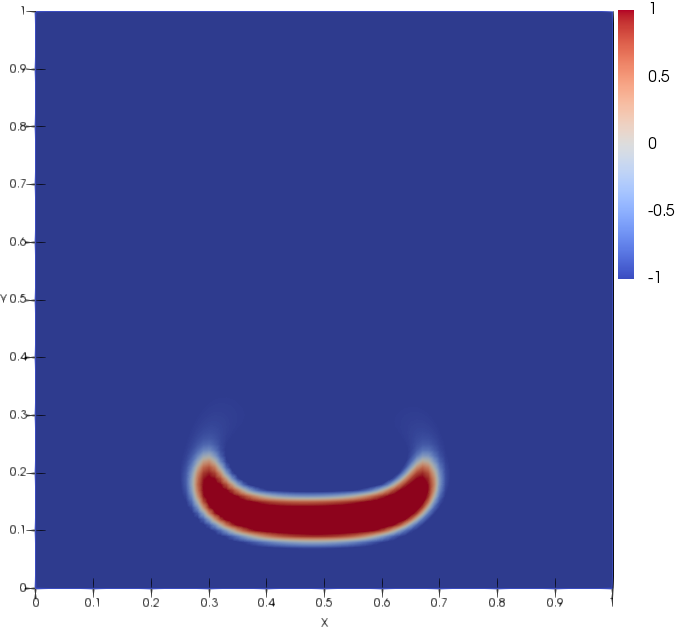} & \includegraphics[scale=0.15]{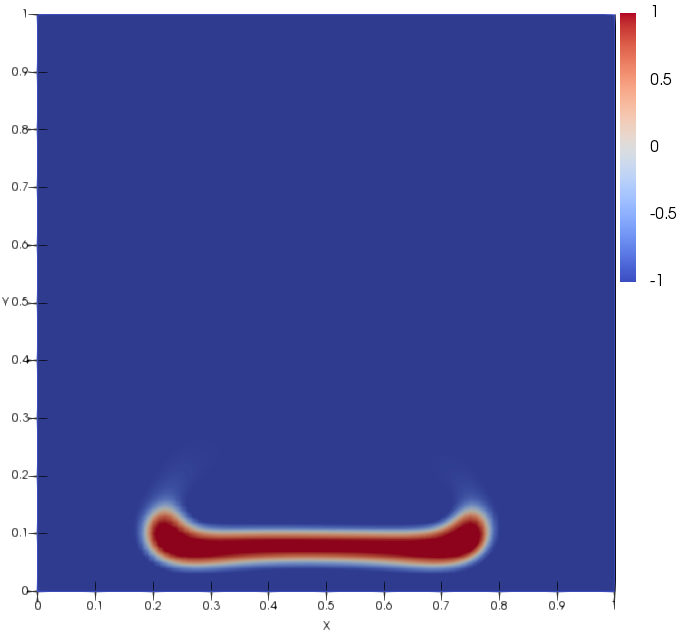}\tabularnewline
				$t=1.01$ & $t=1.51$ & $t=2.01$ & $t=2.5$\tabularnewline
			\end{tabular}
			\par\end{centering}
		\caption{\label{fig:SnapsG}Profile of the phase fields for $\gamma=0.01,0.005,0.001$.(From top to bottom.)}
\end{figure}

\section{Summary}
In this paper, we propose a linear and decoupled finite element method for the CHIMHD model. This full discrete scheme is based on first order Euler semi-implicit scheme with some first order stabilization terms and implicit-explicit treatments for time discretization, and stable mixed finite element approximation for space discretization. In particular, we solve the current density and electric potential simultaneously by using stable face-volume mixed finite element pairs to ensure the discrete current density are divergence-free exactly. The scheme is proved to be mass-conservative, charge-conservative and unconditionally energy stable. We performed some numerical tests to verify the features, accuracy and efficiency of the proposed scheme. In the further work, we will study the highly efficient scheme for the two-phase IMHD flows with large density ratio.

\section*{References}
\bibliography{1203}

\end{document}